\documentclass[11pt,reqno]{amsart}

\usepackage[T1]{fontenc}
\usepackage[english]{babel}

\usepackage[margin=1.2in,footskip=.2in]{geometry}\usepackage{amsmath,amssymb,amscd,amsfonts,mathtools,tikz,caption,float}
\usetikzlibrary{patterns}
\usepackage[linkcolor=blue,colorlinks=true,citecolor=blue]{hyperref}
\usepackage[capitalise]{cleveref}
\setcounter{section}{-1}

\newtheorem{Thm}{Theorem}

\newtheorem{thm}{Theorem}[section]
\newtheorem{lm}[thm]{Lemma}

\newtheorem{coro}[thm]{Corollary}


\newcommand{\abs}[1]{\left\vert#1\right\vert}

\newcommand{\R}{\mathbb{R}}

\newcommand{\h}{\mathbb{H}}
\newcommand{\Q}{\mathbb{Q}}
\newcommand{\Z}{\mathbb{Z}}

\newcommand{\bbm}{\begin{bmatrix}}
\newcommand{\ebm}{\end{bmatrix}}
\newcommand{\bpm}{\begin{pmatrix}}
\newcommand{\epm}{\end{pmatrix}}
\newcommand{\bsm}{\left(\begin{smallmatrix}}
\newcommand{\esm}{\end{smallmatrix}\right)}
\newcommand{\bsbm}{\left[\begin{smallmatrix}}
\newcommand{\esbm}{\end{smallmatrix}\right]}

\newcommand{\sign}{\mathrm{sign}}
\newcommand{\vol}{\mathrm{vol}}
\newcommand{\tr}{\mathrm{tr}}
\newcommand{\1}{\mathrm{1}}
\newcommand{\SL}{\mathrm{SL}}
\newcommand{\PSL}{\mathrm{PSL}}

\begin{document}

\title[Reciprocity of Dedekind sums and the Euler class]{Reciprocity of Dedekind sums and the Euler class}
\author{Claire Burrin}
\address{Dept. of Mathematics, Rutgers University, 110 Frelinghuysen Rd, Piscataway, NJ 08854}
\email{claire.burrin@rutgers.edu}
\date{\today}
\subjclass[2010]{11F20, 30F35,20J06} 
\maketitle

\begin{abstract}
Dedekind sums are arithmetic sums that were first introduced by Dedekind in the context of elliptic functions and modular forms, and later recognized to be surprisingly ubiquitous. Among the variations and generalizations introduced since, there is a construction of Dedekind sums for lattices in $\SL_2(\R)$. Building upon work of Asai, we prove the reciprocity law for these Dedekind sums, based on a concrete realization of the Euler class. As an application, we obtain an explicit formula for Dedekind sums on Hecke triangle groups in terms of continued fractions.\\ 
%
\end{abstract}

\section*{Introduction}

Let $(\!(\cdot)\!):\R\to\left(-\frac12,\frac12\right)$ be the sawtooth function defined by
\begin{align*}
(\!(x)\!) = \begin{dcases} x - \lfloor x\rfloor - 1/2 & x\not\in\Z, \\ 0 & x\in \Z, \end{dcases}
\end{align*}
where $\lfloor x\rfloor$ is the largest integer $\leq x$.
Dedekind \cite{Ded} introduced the arithmetic sums
\begin{align}\label{DS}
s(d,c) = \sum_{k=1}^{c-1} \left(\!\left(\frac{k}{c}\right)\!\right)\left(\!\left(\frac{kd}{c}\right)\!\right),
\end{align}
for $d,c$ coprime integers, in connection to the modular transformation of $\log\eta$, the logarithm of the Dedekind $\eta$-function,
$$
\eta(z) = e^{\pi iz/12}\prod_{n\geq1} \left(1-e^{2\pi inz}\right).
$$
From that transformation, Dedekind further deduced the beautiful identity
\begin{align}\label{RL}
s(d,c) + s(c,d) = \frac{1}{12}\left(\frac{d}{c} + \frac{1}{dc} +\frac{c}{d}\right) - \frac{1}{4}.
\end{align}
Beyond being esthetically appealing, this reciprocity law has an immediate practical purpose; together with the more obvious observations
\begin{align*}
s(0,1) &=0 \quad \text{and}  \\  s(d',c) &= s(d,c)\quad \text{if}\quad d'\equiv d\mod c,
\end{align*}
it allows for the fast computation of values of Dedekind sums via the Euclidean algorithm. (In terms of applications, this is all the more significant when considering, say, the various interactions between Dedekind-type sums and pseudo-random number generation.)

Dedekind's reciprocity law (\ref{RL}) can be seen as a special case of the more general formula deduced by Dieter \cite{Dieter} (once again from the transformation of $\log\eta$),
\begin{align}\label{RLR}
s(d_1,c_1) + s(d_2,c_2) + s(d_3,c_3) = \frac{1}{12}\left(\frac{c_1}{c_3 c_2} + \frac{c_2}{c_1 c_3} + \frac{c_3}{c_2 c_1}\right) -\frac{1}{4}
\end{align}
for $c_i$, $d_i$ given by the relation
\begin{align*}
\bpm a_1 & b_1 \\ c_1&d_1 \epm\bpm a_2 & b_2 \\ c_2&d_2 \epm\bpm a_3 & b_3 \\ c_3&d_3 \epm = \bpm 1&\\&1\epm,
\end{align*}
where each matrix is an element of $\SL_2(\Z)$.

Following an idea of Kubota, Asai \cite{Asai} argues that these reciprocity formulas are consequences of a deeper mechanism underlying the relation between Dedekind sums and the transformation of $\log\eta$. Indeed, he shows that Dieter's reciprocity law (\ref{RLR}) 
can be derived without relying explicitly on the theory of modular forms (which is what Dieter, and before him, Dedekind, had done) but instead from a careful investigation of the "splitting" of the central extension
\begin{align*}
0 \to \Z \to \widetilde{\SL_2(\R)} \to \SL_2(\R) \to 1
\end{align*}
over $\SL_2(\Z)$, where $\widetilde{SL_2(\R)}$ denotes the universal covering group of $\SL_2(\R)$. To explain what this means, we first reinterpret this statement in terms of the Euler class. It is a standard fact that isomorphism classes of central extensions are classified by cohomology in degree 2, and that the second cohomology group $\mathrm{H}^2(\SL_2(\R))$ is one-dimensional, and generated by the Euler class. In Asai's terminology, the "splitting" refers to the existence of a map $\rho:\SL_2(\Z)\to\R$ satisfying the relation
\begin{align}\label{TRIV}
\rho(\gamma \tau)-\rho(\gamma)-\rho(\tau) = \omega(\gamma,\tau),
\end{align}
where $\omega$ is a 2-cocycle representative of the Euler class. In these terms, Dieter's reciprocity law (\ref{RLR}) can be tracked down to two features of the chosen representative $\omega$
\begin{enumerate}
\item[(I)] the Dedekind sums are determined by a function $\rho:\SL_2(\Z)\to\R$ satisfying (\ref{TRIV}),
\item[(II)] there is an explicit formula expressing $\omega$.
\end{enumerate}
Consequently, Asai suggests that (\ref{TRIV}) be named the generalized reciprocity law. Using this framework, we will deduce Dieter's reciprocity law (\ref{RLR}) for generalized Dedekind sums arising from lattices $\Gamma<\SL_2(\R)$. 

We conclude that, while (\ref{DS}), (\ref{RL}) and (\ref{RLR}) are arithmetic formulas, and can as well be proven purely arithmetically (see \cite{Rad}), their underlying mechanism is not only independent of arithmetic -- that was already clear from Dedekind's original proof -- but also independent of any arithmetic properties of the group at large.

Moreover, the cohomological considerations above indicate that any $\rho:\Gamma\to\R$ satisfying (\ref{TRIV}) relates to the generalized Dedekind sums coming from $\Gamma$. Atiyah famously identified seven (roughly) equivalent definitions, coming from geometry, topology, physics, number theory, of $\rho$ for $\SL_2(\Z)$ \cite{Ati}, a list that was further complemented by Barge and Ghys, who made explicit the relation to the Euler class \cite{BG}. The identification of general invariants for hyperbolic surfaces that thus relate to Dedekind sums is an intriguing program, and the object of further research.

\subsection*{Presentation of results}

If $\Gamma<\SL_2(\R)$ is a lattice with cusp(s), there is a known construction
of maps $\rho:\Gamma\to\R$ satisfying (\ref{TRIV}) in the automorphic forms literature, with connections to the Kronecker first limit formula and to the theory of multiplier systems \cite{Go,Hej,Pat}. Using this construction, the author introduced in \cite{moi} Dedekind symbols, whose definition we now review.

Fix a cusp $\frak{a}$ of $\Gamma$. Let $\Gamma_\frak{a}$ be the isotropy subgroup of $\frak{a}$. Fix a scaling $\sigma_\frak{a}$. Each non-trivial double coset $[\![\gamma]\!]=\Gamma_\frak{a}\gamma\Gamma_\frak{a}$, $\gamma\in\Gamma$, yields the Dedekind symbol 
\begin{align*}
\mathcal{S}_\frak{a}\left([\![\gamma]\!]\right) = \frac{\vol(\Gamma\backslash\h)}{4\pi}\frac{a_\gamma+d_\gamma}{c_\gamma} - \rho_\frak{a}\bpm a_\gamma&b_\gamma\\ c_\gamma&d_\gamma\epm - \frac14 \sign(c_\gamma)
\end{align*}
where $\bsm a_\gamma&b_\gamma\\ c_\gamma&d_\gamma\esm = \sigma_\frak{a}^{-1}\gamma\sigma_\frak{a}$ and where $\rho_\frak{a}$ verifies (\ref{TRIV}). We note that the definition of Dedekind symbol does not actually depend on the particular choice of scaling $\sigma_\frak{a}$. 
While the algebraic structure of double cosets is very useful in practice, it is not the most intuitive. The following proposition shows that the Dedekind symbol $\mathcal{S}_\frak{a}$ can alternatively be seen as a function on the cusp set $\mathcal{O}=\Gamma.\frak{a}$ of $\Gamma$.

\begin{Thm}\label{THM0}
Let $\mathcal{O}=\Gamma.\frak{a}$ be the orbit of $\frak{a}$ under the action of $\Gamma$ by fractional linear transformations, endowed with the equivalence relation $[x]=[y]$ if there exists $\gamma\in\Gamma_\frak{a}$ such that $\gamma x=y$. Let $\mathcal{O}'=\Gamma.\frak{a}\setminus\{\frak{a}\}$. There is a one-to-one correspondance between equivalence classes $[x]$, $x\in\mathcal{O}'$, and non-trivial double cosets in $\Gamma_\frak{a}\backslash\Gamma\slash\Gamma_\frak{a}$. Consequently, we may define
$$
\mathcal{S}_\frak{a}([\gamma.\frak{a}]) = \mathcal{S}_\frak{a}([\![\gamma]\!]).
$$
\end{Thm}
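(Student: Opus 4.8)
The statement is the orbit--double-coset correspondence applied to the action of $\Gamma$ on $\partial\h$ by fractional linear transformations, so the proof is a bookkeeping argument and I would present it as such. The plan is to first pass through the orbit--stabilizer identification, then transport the equivalence relation, then match up the punctured sets.

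First I would record that, since $\Gamma_\frak{a}$ is by definition the \emph{full} isotropy subgroup of $\frak{a}$ in $\Gamma$, the orbit map $\gamma\mapsto\gamma.\frak{a}$ descends to a $\Gamma$-equivariant bijection $\Gamma/\Gamma_\frak{a}\xrightarrow{\ \sim\ }\mathcal{O}$. Transporting the equivalence relation $[x]=[y]\iff y\in\Gamma_\frak{a}x$ through this bijection identifies $\mathcal{O}/{\sim}$ with the double coset space $\Gamma_\frak{a}\backslash\Gamma\slash\Gamma_\frak{a}$, the class of $\gamma.\frak{a}$ corresponding to $\Gamma_\frak{a}\gamma\Gamma_\frak{a}$. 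Concretely I would verify the three required points directly. (i) \emph{Well-defined}: if $\gamma'=g_1\gamma g_2$ with $g_1,g_2\in\Gamma_\frak{a}$, then $\gamma'.\frak{a}=g_1\gamma.\frak{a}$ because $g_2$ fixes $\frak{a}$, so $[\gamma'.\frak{a}]=[\gamma.\frak{a}]$. (ii) \emph{Injective}: if $\gamma'.\frak{a}=g_1\gamma.\frak{a}$ for some $g_1\in\Gamma_\frak{a}$, then $\gamma^{-1}g_1^{-1}\gamma'$ fixes $\frak{a}$, hence equals some $g_2\in\Gamma_\frak{a}$, giving $\gamma'=g_1\gamma g_2\in\Gamma_\frak{a}\gamma\Gamma_\frak{a}$. (iii) \emph{Surjective}: every $x\in\mathcal{O}$ is of the form $\gamma.\frak{a}$ by definition of the orbit.

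Finally I would restrict to the punctured sets. The double coset $\Gamma_\frak{a}\gamma\Gamma_\frak{a}$ is trivial (i.e. equals $\Gamma_\frak{a}$) exactly when $\gamma\in\Gamma_\frak{a}$, which is exactly the condition $\gamma.\frak{a}=\frak{a}$; equivalently, by injectivity in (ii), $[\gamma.\frak{a}]=[\frak{a}]$ iff $\gamma\in\Gamma_\frak{a}$. Hence the bijection of the previous step restricts to a bijection between non-trivial double cosets in $\Gamma_\frak{a}\backslash\Gamma\slash\Gamma_\frak{a}$ and equivalence classes $[x]$ with $x\in\mathcal{O}'=\Gamma.\frak{a}\setminus\{\frak{a}\}$, which is the assertion, and the formula $\mathcal{S}_\frak{a}([\gamma.\frak{a}])=\mathcal{S}_\frak{a}([\![\gamma]\!])$ is then a legitimate definition.

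There is essentially no genuine obstacle here: the proof is formal. The one point I would keep in sight, and state explicitly, is that $\Gamma_\frak{a}$ must be the entire isotropy group of $\frak{a}$ (so that it contains $-I$ and any elliptic or other elements of $\Gamma$ fixing $\frak{a}$, not merely the maximal parabolic subgroup) --- this is exactly what makes the stabilizer identification $\Gamma/\Gamma_\frak{a}\cong\mathcal{O}$ in the first step correct, and hence everything downstream.
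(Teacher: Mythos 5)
Your proof is correct, but it follows a different route from the paper. You argue abstractly: orbit--stabilizer gives $\Gamma/\Gamma_\frak{a}\cong\mathcal{O}$, transporting the $\Gamma_\frak{a}$-equivalence then identifies $\mathcal{O}/{\sim}$ with $\Gamma_\frak{a}\backslash\Gamma/\Gamma_\frak{a}$, and the trivial coset matches the class $[\frak{a}]=\{\frak{a}\}$, so the bijection restricts to $\mathcal{O}'$ and non-trivial cosets. This is valid, and you correctly isolate the one hypothesis that makes it work, namely that $\Gamma_\frak{a}$ is the \emph{full} isotropy subgroup of $\frak{a}$ (which is how the paper defines it), so that injectivity holds. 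The paper instead works in coordinates: it conjugates by the scaling $\sigma_\frak{a}$, uses that $\sigma_\frak{a}^{-1}\Gamma_\frak{a}\sigma_\frak{a}=\pm\bsm 1&\Z\\&1\esm$ to show that any representative of a non-trivial double coset is completely determined by its first column (or first row), and thereby identifies the double coset $[\![\gamma]\!]$ with the point $a_\gamma/c_\gamma \bmod 1$ in the cusp set $(\sigma_\frak{a}^{-1}\Gamma\sigma_\frak{a}).\infty$. Your argument is more general and more transparent about what is needed (it never uses that $\Gamma$ is Fuchsian, that $\frak{a}$ is a cusp, or any matrix structure), while the paper's computation buys an explicit parametrization of the classes by $a_\gamma/c_\gamma \bmod 1$, which is what later sections exploit (coprime pairs and $\Q$ for $\SL_2(\Z)$, and the $\lambda_q$-continued fraction expansions for Hecke triangle groups). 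Either proof legitimizes the definition $\mathcal{S}_\frak{a}([\gamma.\frak{a}])=\mathcal{S}_\frak{a}([\![\gamma]\!])$.
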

In the case of $\SL_2(\Z)$, the latter result expresses the one-to-one correspondance between coprime integers and $\Q$, and allows to regard Dedekind sums as a function on the cusp set of the modular group, minus the point at infinity. 

Our main result expresses Dieter's reciprocity law (\ref{RLR}) for Dedekind symbols.

\begin{Thm}\label{THM1}
In the notation introduced above,
\begin{align*}
\mathcal{S}_\frak{a}([\![\gamma_1]\!]) + \mathcal{S}_\frak{a}([\![\gamma_2]\!]) + \mathcal{S}_\frak{a}([\![\gamma_3]\!]) = \frac{\vol(\Gamma\backslash\h)}{4\pi}\left(\frac{c_{\gamma_1}}{ c_{\gamma_3}c_{\gamma_2}} + \frac{c_{\gamma_2}}{c_{\gamma_1} c_{\gamma_3}} + \frac{c_{\gamma_3}}{ c_{\gamma_2} c_{\gamma_1}}\right) -\frac14\sign(c_{\gamma_1} c_{\gamma_2} c_{\gamma_3}),
\end{align*}
for all $\gamma_i\in\Gamma$ such that $\gamma_1 \gamma_2 \gamma_3=I$, and $[\![\gamma_i]\!]$ are non-trivial double cosets in $\Gamma_\frak{a}\backslash\Gamma\slash\Gamma_\frak{a}$.
\end{Thm}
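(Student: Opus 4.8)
The plan is to follow the mechanism isolated by Kubota and Asai: deduce the identity from the defining relation $(\ref{TRIV})$ for $\rho_\frak{a}$ together with the explicit formula for the Euler cocycle $\omega$, all of the arithmetic being concentrated in one elementary matrix computation. I would first pass to conjugates: writing $g_i:=\sigma_\frak{a}^{-1}\gamma_i\sigma_\frak{a}$ — so that $g_i$ has lower-left entry $c_{\gamma_i}$ and trace $a_{\gamma_i}+d_{\gamma_i}$, and $g_1g_2g_3=\sigma_\frak{a}^{-1}(\gamma_1\gamma_2\gamma_3)\sigma_\frak{a}=I$ — and recalling that non-triviality of the double cosets $[\![\gamma_i]\!]$ forces $c_{\gamma_i}\neq0$, the expansion of the Dedekind symbols turns the claim into
\begin{equation*}
\frac{\vol(\Gamma\backslash\h)}{4\pi}\sum_{i=1}^{3}\frac{a_{\gamma_i}+d_{\gamma_i}}{c_{\gamma_i}}-\sum_{i=1}^{3}\rho_\frak{a}(g_i)-\frac14\sum_{i=1}^{3}\sign(c_{\gamma_i})=\frac{\vol(\Gamma\backslash\h)}{4\pi}\Big(\frac{c_{\gamma_1}}{c_{\gamma_3}c_{\gamma_2}}+\frac{c_{\gamma_2}}{c_{\gamma_1}c_{\gamma_3}}+\frac{c_{\gamma_3}}{c_{\gamma_2}c_{\gamma_1}}\Big)-\frac14\sign(c_{\gamma_1}c_{\gamma_2}c_{\gamma_3}).
\end{equation*}
Replacing the $\gamma_i$ by other representatives of their double cosets subject to $\gamma_1\gamma_2\gamma_3=I$ only changes a pair of the $c_{\gamma_i}$ by a sign, and this leaves both sides of the displayed equality unchanged, so fixing representatives is harmless.

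Two ingredients then go into the proof. The first is a purely algebraic identity: if $h_1h_2h_3=I$ in $\SL_2(\R)$ and all three lower-left entries $c_{h_i}$ are nonzero, then
\begin{equation*}
\frac{\tr h_1}{c_{h_1}}+\frac{\tr h_2}{c_{h_2}}+\frac{\tr h_3}{c_{h_3}}=-\frac{c_{h_1}^2+c_{h_2}^2+c_{h_3}^2}{c_{h_1}c_{h_2}c_{h_3}}=-\Big(\frac{c_{h_1}}{c_{h_3}c_{h_2}}+\frac{c_{h_2}}{c_{h_1}c_{h_3}}+\frac{c_{h_3}}{c_{h_2}c_{h_1}}\Big),
\end{equation*}
which I would check by writing $h_3=(h_1h_2)^{-1}$, expanding $\tr(h_1h_2)$ and $c_{h_1h_2}$ in terms of the entries of $h_1$ and $h_2$, and simplifying using $\det h_1=\det h_2=1$. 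The second ingredient is $(\ref{TRIV})$, applied twice: from $\rho_\frak{a}(I)=\rho_\frak{a}\big((g_1g_2)g_3\big)$, peeling off first $g_3$ and then $g_1g_2$ yields $\sum_{i=1}^{3}\rho_\frak{a}(g_i)=\rho_\frak{a}(I)-\omega(g_1,g_2)-\omega(g_1g_2,g_3)$, while taking both arguments equal to $I$ in $(\ref{TRIV})$ gives $\rho_\frak{a}(I)=-\omega(I,I)$.

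Feeding these into the left-hand side above turns it into
\begin{equation*}
-\frac{\vol(\Gamma\backslash\h)}{4\pi}\cdot\frac{c_{\gamma_1}^2+c_{\gamma_2}^2+c_{\gamma_3}^2}{c_{\gamma_1}c_{\gamma_2}c_{\gamma_3}}+\omega(I,I)+\omega(g_1,g_2)+\omega(g_1g_2,g_3)-\frac14\sum_{i=1}^{3}\sign(c_{\gamma_i}),
\end{equation*}
and it remains to insert the concrete formula for $\omega$ — the explicit realization of the Euler class, feature (II) above. Using $g_1g_2=g_3^{-1}$ (so that $c_{g_1g_2}=-c_{\gamma_3}$, $\tr(g_1g_2)=a_{\gamma_3}+d_{\gamma_3}$, and so on) together with one more application of the algebraic identity, the volume-weighted pieces of $\omega$ should combine with the first term to produce exactly $\frac{\vol(\Gamma\backslash\h)}{4\pi}\big(\frac{c_{\gamma_1}}{c_{\gamma_3}c_{\gamma_2}}+\frac{c_{\gamma_2}}{c_{\gamma_1}c_{\gamma_3}}+\frac{c_{\gamma_3}}{c_{\gamma_2}c_{\gamma_1}}\big)$, while the surviving $\sign$-terms collapse to $-\frac14\sign(c_{\gamma_1}c_{\gamma_2}c_{\gamma_3})$, giving the right-hand side.

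I expect the main obstacle to be precisely this last substitution. The explicit cocycle $\omega$ takes non-generic values whenever one of its two arguments has vanishing lower-left entry — i.e.\ is parabolic or trivial — so $\omega(g_1g_2,g_3)$ and $\omega(I,I)$, for which the relations $(g_1g_2)g_3=I$ and $I\cdot I=I$ are degenerate, must be evaluated through those boundary cases rather than by blindly plugging into the generic formula; and matching the various sign contributions (the three $\sign(c_{\gamma_i})$ against $\sign(c_{\gamma_1}c_{\gamma_2}c_{\gamma_3})$, plus whatever signs $\omega$ supplies) calls for a short case distinction on the signs of the $c_{\gamma_i}$, of the type familiar from Rademacher's treatment of the $\Phi$-function. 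Everything else is formal bookkeeping with the cocycle relation.
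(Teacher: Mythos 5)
Your strategy is the same as the paper's: expand the definition of $\mathcal{S}_\frak{a}$, use the splitting relation (\ref{AGR}), and insert the explicit cocycle of \cref{omega values}. Everything through your penultimate display is correct — the trace identity for a triple with product $I$, the double application of (\ref{AGR}) giving $\sum_i\rho_\frak{a}(g_i)=\rho_\frak{a}(I)-\omega(g_1,g_2)-\omega(g_1g_2,g_3)$, and $\rho_\frak{a}(I)=-\omega(I,I)$. (The paper runs the identical computation in two-variable form, proving
$\mathcal{S}_\frak{a}([\![\gamma]\!])+\mathcal{S}_\frak{a}([\![\tau]\!])-\mathcal{S}_\frak{a}([\![\gamma\tau]\!])=\frac{V}{4\pi}\frac{c_\gamma^2+c_\tau^2+c_{\gamma\tau}^2}{c_\gamma c_\tau c_{\gamma\tau}}-\frac14\sign(c_\gamma c_\tau c_{\gamma\tau})$
with one use of (\ref{AGR}) and one table lookup; your symmetric-triple bookkeeping is equivalent, just slightly longer.)

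The genuine gap is the final assembly, which you assert rather than carry out, and which cannot happen as described. The cocycle of \cref{omega values} is integer-valued and built purely out of signs; it has no ``volume-weighted pieces'', so nothing in $\omega(I,I)+\omega(g_1,g_2)+\omega(g_1g_2,g_3)$ can convert your leading term $-\frac{V}{4\pi}\frac{c_{\gamma_1}^2+c_{\gamma_2}^2+c_{\gamma_3}^2}{c_{\gamma_1}c_{\gamma_2}c_{\gamma_3}}$ into $+\frac{V}{4\pi}(\cdots)$. Moreover the boundary cases you worry about are immediate: $\omega(I,I)=0$ and $\omega(g_1g_2,g_3)=\omega(g_3^{-1},g_3)=0$ straight from \cref{corol}, while, since $c_{g_1g_2}=-c_{\gamma_3}\neq0$,
\begin{align*}
\omega(g_1,g_2)=\tfrac14\left(\sign(c_{\gamma_1})+\sign(c_{\gamma_2})+\sign(c_{\gamma_3})+\sign(c_{\gamma_1}c_{\gamma_2}c_{\gamma_3})\right).
\end{align*}
Feeding these in, your argument actually establishes
\begin{align*}
\mathcal{S}_\frak{a}([\![\gamma_1]\!])+\mathcal{S}_\frak{a}([\![\gamma_2]\!])+\mathcal{S}_\frak{a}([\![\gamma_3]\!])=-\frac{\vol(\Gamma\backslash\h)}{4\pi}\left(\frac{c_{\gamma_1}}{c_{\gamma_3}c_{\gamma_2}}+\frac{c_{\gamma_2}}{c_{\gamma_1}c_{\gamma_3}}+\frac{c_{\gamma_3}}{c_{\gamma_2}c_{\gamma_1}}\right)+\frac14\sign(c_{\gamma_1}c_{\gamma_2}c_{\gamma_3}),
\end{align*}
the negative of the display you are aiming at. This is not a flaw in your cocycle bookkeeping: converting the paper's two-variable identity to the symmetric form via $\mathcal{S}_\frak{a}([\![\gamma^{-1}]\!])=-\mathcal{S}_\frak{a}([\![\gamma]\!])$ and $c_{\gamma^{-1}}=-c_\gamma$ gives the same sign, and a check in $\SL_2(\Z)$ with $\gamma_i=\bsm 1&0\\ c_i&1\esm$, $(c_1,c_2,c_3)=(1,2,-3)$ (using $\rho(v^n)=-n/12$ from \cref{useful} with $q=3$) gives left side $0+0-\tfrac1{18}$ while the right side of \cref{THM1} as printed is $+\tfrac1{18}$. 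So the discrepancy is a sign-convention issue between the product form $\gamma_3=\gamma_1\gamma_2$ and the symmetric form $\gamma_1\gamma_2\gamma_3=I$, which a complete proof must confront — by stating the conclusion as above, or as the paper's proof does for $\gamma,\tau,\gamma\tau$ — rather than hide behind a cancellation inside $\omega$ that cannot occur. As written, your last paragraph would not survive being made precise.
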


On the other hand, the generalization of Dedekind's reciprocity law (\ref{RL}) only makes sense for groups that contain the involution $S=\bsm &-1\\1&\esm$. In the second part of this article, we restrict our attention to the Hecke triangle groups $G_q$. 

Recall that $G_q$ is a discrete triangle group of type $(2,q,\infty)$, where $q\geq 3$. This means that $G_q$ is generated by reflections on the sides of a triangle with interior angles $(\pi/2,\pi/q,0)$. Algebraically, $G_q$ is generated by the involution $S$ together with the translation $T_q=\bsm 1 & \lambda_q \\ &1\esm$, where $\lambda_q=2\cos(\pi/q)$. Since there is only one cusp, hence one Dedekind symbol, we will write $\mathcal{S}$ instead of $\mathcal{S}_\infty$, and consider $\mathcal{S}$ as a function on equivalence classes of 
$$
\mathcal{O}=\{\gamma.\infty \text{ mod }\lambda_q :\gamma\in G_q\},
$$
as per the correspondance stated in \cref{THM0}.

\begin{Thm}\label{thm HTG}
If $\frac{a}{c}\in\mathcal{O}$ with $ac\neq0$, then
\begin{align*}
\mathcal{S}\left(\left[\frac{a}{c}\right]\right) - \mathcal{S}\left(\left[\frac{c}{a}\right]\right)\ =\ \frac{1-\frac{2}{q}}{8\cos(\frac{\pi}{q})}\left(\frac{a}{c} +\frac{1}{a c} + \frac{c}{a}\right) - \frac{1}{4}\sign(ac)
\end{align*}
\end{Thm}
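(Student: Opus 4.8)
The plan is to derive \cref{thm HTG} as a consequence of the general triple reciprocity law \cref{THM1}, specialized to the Hecke triangle group $G_q$ with the particular triple arising from the involution $S$. Given $\frac{a}{c}\in\mathcal O$ with $ac\neq 0$, pick $\gamma=\bsm a&b\\ c&d\esm\in G_q$ representing this cusp, so that $\mathcal S([\frac{a}{c}])=\mathcal S([\![\gamma]\!])$. The key algebraic observation is that $S\gamma^{-1}$ (or a suitable variant) represents the cusp $\frac{c}{a}$ up to the $\Gamma_\infty$-action, since $S$ swaps $0$ and $\infty$ and inverts, sending $\frac{a}{c}\mapsto -\frac{c}{a}$; one checks that the sign works out and that $[\![S\gamma^{-1}S^{-1}]\!]$ or the appropriate double coset indeed corresponds to $[\frac{c}{a}]$. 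Then I would write $\gamma_1,\gamma_2,\gamma_3$ with $\gamma_1\gamma_2\gamma_3=I$ so that $[\![\gamma_1]\!]=[\![\gamma]\!]$, $[\![\gamma_2]\!]$ is (a translate of) $S$, and $[\![\gamma_3]\!]$ corresponds to $[\frac{c}{a}]$; here $\gamma_2=S$ has lower-left entry $c_{\gamma_2}=1$.

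Next I would compute the right-hand side of \cref{THM1} for this triple. Writing $c_{\gamma_1}=c$ (the denominator of $\frac{a}{c}$) and $c_{\gamma_3}=\pm a$ (the denominator of the cusp $\frac{c}{a}$, with the sign determined by the chosen representatives) and $c_{\gamma_2}=1$, the volume prefactor $\frac{\vol(G_q\backslash\h)}{4\pi}$ combines with $\left(\frac{c_{\gamma_1}}{c_{\gamma_3}c_{\gamma_2}}+\frac{c_{\gamma_2}}{c_{\gamma_1}c_{\gamma_3}}+\frac{c_{\gamma_3}}{c_{\gamma_2}c_{\gamma_1}}\right)=\pm\left(\frac{c}{a}+\frac{1}{ac}+\frac{a}{c}\right)$, and the sign term becomes $-\frac14\sign(c_{\gamma_1}c_{\gamma_2}c_{\gamma_3})=-\frac14\sign(ac)$ after accounting for $\sign(c_{\gamma_2})=1$. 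Here I will use the standard fact that the hyperbolic area of the $(2,q,\infty)$ triangle group quotient is $\vol(G_q\backslash\h)=\pi\bigl(1-\tfrac12-\tfrac1q\bigr)\cdot 2 = \pi\bigl(1-\tfrac2q\bigr)$ (the fundamental domain being two copies of the triangle with angle defect $\pi-\pi/2-\pi/q-0$), so that $\frac{\vol(G_q\backslash\h)}{4\pi}=\frac{1}{4}\bigl(1-\tfrac2q\bigr)$; dividing by the extra $\lambda_q=2\cos(\pi/q)$ that appears when converting the entry $c$ back to the cusp coordinate in the scaled group yields exactly the prefactor $\frac{1-2/q}{8\cos(\pi/q)}$ in the statement.

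The remaining task is to control the middle term $\mathcal S([\![\gamma_2]\!])$ and the relation between $\mathcal S([\![\gamma_3]\!])$ and $\mathcal S([\frac{c}{a}])$. For $\gamma_2=S$ (or its $T_q$-translates), I would compute $\mathcal S([\![S]\!])$ directly from the definition: with $\sigma_\infty=I$ one has $a_S+d_S=0$, $c_S=1$, so $\mathcal S([\![S]\!])=-\rho_\infty(S)-\frac14$, and I would show $\rho_\infty(S)=0$ for the canonical choice of $\rho_\infty$ (this is the analogue of $\Phi(S)=0$ for the Rademacher function, and follows from the symmetry properties of the construction via the Kronecker limit formula, or from the cocycle relation applied to $S^2=-I$ together with $\rho_\infty(-I)$ being pinned down). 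Thus $\mathcal S([\![S]\!])=-\frac14$, which is consistent since $S$ does not correspond to a cusp in $\mathcal O'$ with $ac\neq 0$ — indeed $S.\infty=0$, a boundary case that I must handle by a limiting/direct argument rather than via \cref{THM0}. For $\gamma_3$, I would verify via \cref{THM0} that $[\gamma_3.\infty]=[\frac{c}{a}]$, so $\mathcal S([\![\gamma_3]\!])=\mathcal S([\frac{c}{a}])$, possibly up to correcting by the fact that $\mathcal S$ is not a homomorphism — but since $\gamma_3$ is a genuine representative, no correction is needed. Substituting all three values into \cref{THM1} and rearranging gives the claimed identity.

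The main obstacle I anticipate is bookkeeping the signs and the normalization constants correctly: tracking how the entry $c_\gamma$ of the scaled matrix $\sigma_\infty^{-1}\gamma\sigma_\infty$ relates to the cusp coordinate $\frac{a}{c}$ (which introduces factors of $\lambda_q$), pinning down $\rho_\infty(S)$ and $\rho_\infty(-I)$ unambiguously, and making sure the choice of representatives $\gamma_i$ with $\gamma_1\gamma_2\gamma_3=I$ is compatible with the double-coset identifications on all three factors simultaneously. A secondary subtlety is that the cusp $S.\infty=0$ lies outside the regime $ac\neq 0$, so the appearance of $\mathcal S([\![S]\!])=-\frac14$ on the left must be absorbed into the constant $-\frac14\sign(ac)$ on the right rather than interpreted as a Dedekind symbol value on $\mathcal O'$; I expect this to be a short but delicate argument.
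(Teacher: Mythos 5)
Your high-level route --- specializing \cref{THM1} to a triple built from $\gamma$ and the involution $S$ --- is precisely the reduction the paper itself points to (it remarks that the theorem ``follows as a corollary to \cref{THM1}''), and the paper's displayed proof is the equivalent direct computation of $\mathcal{S}([\![\gamma]\!])-\mathcal{S}([\![S\gamma]\!])$ via the relation (\ref{AGR}) together with \cref{omega values} and \cref{useful}. The problem is a concrete error at the one point where your argument needs a specific value: $\rho_\infty(S)$ is \emph{not} $0$ in this normalization. The value is forced by (\ref{AGR}) and \cref{corol} alone: $\rho(I)=-\omega(I,I)=0$, then $0=\rho(I)=2\rho(-I)+\omega(-I,-I)=2\rho(-I)+1$ gives $\rho(-I)=-\tfrac12$, and since $s^2=-I$ with $\omega(s,s)=0$ (the relevant signs are $(1,1,1)$), one gets $\rho(s)=-\tfrac14$ --- exactly \cref{useful}. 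So the very derivation you propose (the cocycle relation applied to $S^2=-I$) yields $-\tfrac14$, not $0$; the analogy with the Rademacher function $\Phi(S)=0$ does not transfer because $\rho$ differs from a Rademacher-type function by the $\tfrac14\sign$-corrections built into $\mathcal{S}$. Consequently $\mathcal{S}([\![S]\!])=-\rho(s)-\tfrac14=0$, not $-\tfrac14$, and your plan of ``absorbing $\mathcal{S}([\![S]\!])=-\tfrac14$ into the constant $-\tfrac14\sign(ac)$'' would leave the final constant off by $\tfrac14$; with the correct value there is nothing to absorb. (Relatedly, no limiting argument is needed for the cusp $S.\infty=0$: since $c_s=\lambda_q\neq0$, $[\![S]\!]$ is a non-trivial double coset, $0\in\mathcal{O}'$, and $\mathcal{S}([\![S]\!])$ is given directly by the defining formula.)

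The second gap is the step you defer with ``one checks that the sign works out,'' which is in fact where the whole content of the proof lives. Since $S.(a/c)=-c/a$, neither $S\gamma$ nor $-S\gamma$ has first column proportional to $(c,a)$, so identifying $\mathcal{S}([\tfrac{c}{a}])$ with the symbol of your $\gamma_3$ requires the lemmas $\mathcal{S}([\![-\gamma]\!])=\mathcal{S}([\![\gamma]\!])$ and $\mathcal{S}([\![\gamma^{-1}]\!])=-\mathcal{S}([\![\gamma]\!])$ (this is how the paper passes from $-\mathcal{S}([\![S\gamma]\!])$ to $\mathcal{S}([\![\gamma^{-1}S]\!])$), and passing to inverses flips the sign of the $c$-entry, which flips the sign of the entire right-hand side of \cref{THM1}; leaving $c_{\gamma_3}=\pm a\lambda_q$ undetermined therefore leaves the sign of your rational term and of $\sign(c_{\gamma_1}c_{\gamma_2}c_{\gamma_3})$ undetermined. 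Note also that in the scaled group every $c$-entry carries a factor $\lambda_q$ (e.g.\ $c_s=\lambda_q$, not $1$); this cancels in the ratios and only survives as the $\tfrac{1}{\lambda_q}$ in the prefactor, which you did account for. The paper sidesteps this case analysis by subtracting the two defining formulas directly: by (\ref{AGR}) the difference collapses to the rational term plus $\rho(s)+\omega(s,\gamma_q)+\tfrac14(\sign(a)-\sign(c))$, and \cref{omega values} converts the latter into $\tfrac14-\tfrac14\sign(ac)$, whence the result after inserting $\rho(s)=-\tfrac14$. To repair your write-up, either redo it along these lines or fix the representative for $[\tfrac{c}{a}]$ explicitly and carry the forced values $\rho(s)=-\tfrac14$, $\mathcal{S}([\![S]\!])=0$ through the triple-sum bookkeeping.
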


Furthermore, points in $\mathcal{O}$ can be expanded in $\lambda_q$-continued fractions. In fact, each element $\gamma\in G_q$ can be expressed as a word in the group generators $S_q= \bsm 1&\\ \lambda_q&1\esm$ and $T_q=\bsm 1&\lambda_q\\ &1\esm$, and 
\begin{align*}
\left(S_q^{a_1} T_q^{a_2} \cdots T^{a_{n-1}}_q S_q^{a_n}\right).\infty\ =\ \cfrac{1}{a_1\lambda_q + \cfrac{1}{a_2\lambda_q +\cfrac{1}{\dots}}}.
\end{align*}
We denote the RHS of this equation by $ [a_1,a_2,\dots,a_n]$. By applying the reciprocity law of \cref{thm HTG} recursively, we obtain an explicit formula for the Dedekind symbol $\mathcal{S}$ in terms of $\lambda_q$-continued fractions, which generalizes a theorem of Hickerson for the Dedekind sums \cite[Thm.~1]{Hickerson}.

\begin{Thm}\label{THM3}
Let $[a_1,a_2,\dots,a_n]$ be a finite $\lambda_q$-continued fraction expansion as above. Then
\begin{align*}
\mathcal{S}([a_1,a_2,\dots,a_n])& =\\
\frac{1-\frac{2}{q}}{4\lambda_q} &\left( [a_1,\dots,a_n] +(-1)^{n+1}[a_n,\dots,a_1] - \lambda_q\sum_{j=1}^n(-1)^j a_j\right)- \frac{1-(-1)^n}{8}.
\end{align*}
\end{Thm}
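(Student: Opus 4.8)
The plan is to prove \cref{THM3} by induction on the length $n$ of the continued fraction expansion, using the reciprocity law of \cref{thm HTG} as the inductive engine. First I would record the basic building blocks: that $\mathcal{S}([\gamma.\infty])$ depends only on the equivalence class, that $\mathcal{S}$ vanishes on the class of $\infty$ (the trivial double coset), and the elementary transformation rules under the generators, namely that $T_q^{\pm1}$ acts trivially on the Dedekind symbol (since it lies in $\Gamma_\infty$, so $[\gamma T_q.\infty]=[\gamma.\infty]$ and indeed $[T_q\gamma]$ and $[\gamma]$ give the same double coset) and that left-multiplication by $S_q$ or right-translation relates $[a_1,\dots,a_n]$ to shorter expansions. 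The key combinatorial identity I will need is that $S$ sends the continued fraction $[a_1,a_2,\dots,a_n]$ to (a $\lambda_q$-translate of) $[a_2,\dots,a_n]$ up to sign and an adjustment of $a_1$; concretely $S.\frac1x = -x$, so applying $S$ peels off the first partial quotient and produces the reciprocal, which is exactly the configuration $\frac{a}{c}\mapsto\frac{c}{a}$ governed by \cref{thm HTG}.

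The inductive step then goes as follows. Write $x=[a_1,a_2,\dots,a_n]$ and let $x'=[a_2,\dots,a_n]$, so that $x = 1/(a_1\lambda_q + x')$, i.e. $1/x = a_1\lambda_q + x'$. Then $1/x$ and $x'$ differ by an element of $\Gamma_\infty$, so $\mathcal{S}([1/x]) = \mathcal{S}([x'])$; and $[1/x]$ is the reciprocal class of $[x]$ in the sense of \cref{thm HTG}. Applying \cref{thm HTG} to the pair $\bigl(\frac{a}{c},\frac{c}{a}\bigr) = (x,1/x)$ (with $a,c$ the numerator/denominator of $x$ in lowest terms with respect to the $G_q$-action) gives
\begin{align*}
\mathcal{S}([x]) = \mathcal{S}([x']) + \frac{1-\frac2q}{8\cos(\pi/q)}\left(x + \frac{1}{ac} + \frac{1}{x}\right) - \frac14\sign(ac).
\end{align*}
Now I substitute the inductive hypothesis for $\mathcal{S}([x']) = \mathcal{S}([a_2,\dots,a_n])$, and the bulk of the work is to check that the resulting expression telescopes into the claimed closed form for $\mathcal{S}([a_1,\dots,a_n])$. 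This requires three computations: (i) that $[a_1,\dots,a_n] + (-1)^{n+1}[a_n,\dots,a_1]$ minus its $(n-1)$-term analogue produces, after multiplication by $\frac{1-2/q}{4\lambda_q}$, the term $\frac{1-2/q}{8\cos(\pi/q)}\cdot x = \frac{1-2/q}{4\lambda_q}x$ together with the reversal discrepancy; here one uses the standard continued-fraction fact that the "reversed" continued fraction $[a_n,\dots,a_1]$ equals $q_{n-1}/q_n$ in terms of the convergent denominators, so that $\frac1x$ and the reversed fraction are congruent mod $\lambda_q$ and the $\frac{1}{ac}$ term matches $\lambda_q^{-1}$ times the difference of reversed fractions; (ii) that the alternating sum $-\lambda_q\sum_{j=1}^n(-1)^j a_j$ minus its shifted analogue $-\lambda_q\sum_{j=2}^n(-1)^j a_j$ (with indices relabelled) contributes the single partial quotient $a_1$, which is exactly what is needed to turn $\frac1x = a_1\lambda_q + x'$ into the right linear combination; and (iii) that the constant $-\frac{1-(-1)^n}{8}$ changes by the correct amount, $\mp\frac14\sign(ac)$, when $n$ changes parity — this is where the sign bookkeeping, i.e. tracking $\sign(a)$, $\sign(c)$ and $\sign(ac)$ through the recursion, must be done carefully, possibly after first reducing to the case $a_j>0$ for all $j$ (all partial quotients positive) and handling signs separately.

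The main obstacle I anticipate is precisely this sign and parity bookkeeping together with the matching of the "mixed" term $\frac{1}{ac}$. The reciprocity law of \cref{thm HTG} is stated for a reduced fraction $a/c$ with $ac\neq0$, but in the recursion the natural object $1/x = a_1\lambda_q + x'$ is not literally of that form — one must identify $c/a$ with the class of $x'$ via an element of $\Gamma_\infty$, and verify that the numerator/denominator data $(a,c)$ transforms as $(a,c)\mapsto(c, a - a_1\lambda_q c)$-type step consistently with the convergent recursion $p_n = a_n\lambda_q p_{n-1} + p_{n-2}$. Establishing that $[a_n,\dots,a_1]$ really is the ratio of consecutive convergent denominators of $[a_1,\dots,a_n]$ (the $\lambda_q$-analogue of the classical identity) is the one genuinely non-formal lemma I will need, and I would prove it by a separate short induction on $n$ using the matrix identity $S_q^{a_1}T_q^{a_2}\cdots = \left(\begin{smallmatrix} p_{n-1} & * \\ p_n & * \end{smallmatrix}\right)$-style expansion, reading off entries and comparing with the reversed word. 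Once that identity and the positive-partial-quotient case are in hand, the general case and the final telescoping are routine algebra, and the base case $n=1$ (where $[a_1] = 1/(a_1\lambda_q)$ and one computes $\mathcal{S}$ directly from \cref{thm HTG} applied to $\frac{1}{a_1\lambda_q}$ versus the class of $\infty$) fixes all normalizations.
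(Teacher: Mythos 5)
Your overall strategy coincides with the paper's: an induction on the length of the expansion driven by the reciprocity law of \cref{thm HTG} together with $\Gamma_\infty$-invariance. The difference is in the implementation, and it matters for how much work is left. You peel off the \emph{first} partial quotient and carry the explicit closed form (including the reversed fraction $[a_n,\dots,a_1]$) through the induction; this forces you to prove the reversal-of-convergents lemma and then to verify a genuinely nontrivial continued-fraction identity at each step, since the increment $\mathcal{S}([x])-\mathcal{S}([x'])$ produced by \cref{thm HTG} must be matched against a difference of closed forms in which \emph{both} $[a_n,\dots,a_1]$ and $[a_n,\dots,a_2]$ appear, together with the term $\tfrac{1}{ac}$, the doubled alternating sum, and the parity constants. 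This is exactly the Hickerson-style computation, and it can be made to work, but calling the telescoping ``routine algebra'' underestimates it: that verification, plus the sign bookkeeping you flag, is most of the proof in your route. The paper instead peels off the \emph{last} letter of the word $\gamma_n$, using $[\![\gamma_{n-1}]\!]=[\![\gamma_n S]\!]$, and phrases the induction hypothesis as $\frac{V_q}{4\pi\lambda_q}\left(\gamma_{n-1}.\infty-\gamma_{n-1}^{-1}.\infty-\lambda_q\sum(-1)^ja_j\right)-\cdots$ rather than in continued fractions; then the whole inductive step is reading off matrix entries of $\gamma_n$, and the reversal identity is never needed inside the induction --- it appears only at the very end through the observation $\frac{a}{c}=\gamma_n.\infty$, $\frac{d}{c}=-\gamma_n^{-1}.\infty$. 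So your route buys a self-contained classical CF argument at the cost of an extra lemma and heavier bookkeeping; the paper's bookkeeping is lighter because the group element carries the data.

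One concrete correction: your base case is off as stated. The Dedekind symbol is defined only on non-trivial double cosets, so ``$\mathcal{S}$ vanishes on the class of $\infty$'' is not a statement available to you, and the reciprocal class of $[a_1]=1/(a_1\lambda_q)$ is the class of $a_1\lambda_q\equiv 0$, i.e.\ $[0]=[S.\infty]$, not $[\infty]$. The correct anchor is $\mathcal{S}([0])=\mathcal{S}([\![S]\!])=0$, which you can get either from the two lemmas on selected properties (since $S^{-1}=-S$ forces $\mathcal{S}([\![S]\!])=-\mathcal{S}([\![S]\!])$) or, as the paper does for its $n=1,2$ base cases, directly from the definition using the values $\rho(s)=-\tfrac14$ and $\rho(v^{a_1})=-\rho(u^{a_1})$ from \cref{useful}; some such input fixing the normalization of $\rho$ is unavoidable and cannot be extracted from \cref{thm HTG} alone.
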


\section{Notation and terminology}
Let $\h$ denote the hyperbolic upper half-plane, and recall that $\SL_2(\R)$ acts on $\h$ by fractional linear transformations, and that this action factors through $\PSL_2(\R)$. Throughout the article, $\Gamma$ will denote a cofinite Fuchsian group. That is, a discrete subgroup of $\SL_2(\R)$ of finite covolume $V=\vol(\Gamma\backslash\h)<\infty$, containing at least one parabolic element. We recall that $\gamma\in\Gamma$ is parabolic if $\abs{\tr(\gamma)}=2$ or, equivalently, if the action of $\gamma$ on $\overline{\h}=\h\cup\partial\h$ fixes a single point and that this point is in $\partial\h =\R\cup\{\infty\}$. Such a point is referred to as a cusp and will be denoted $\frak{a}$. For each cusp $\frak{a}$, the isotropy subgroup of elements of $\Gamma$ fixing $\frak{a}$ is denoted by $\Gamma_\frak{a}$. It is, up to sign, an infinite cyclic subgroup of $\Gamma$. A matrix $\sigma_\frak{a}\in\SL(2,\R)$ is called a scaling if it verifies $\sigma_\frak{a}(\infty)=\frak{a}$ and
$$
\sigma_\frak{a}^{-1}\Gamma_\frak{a}\sigma_\frak{a} =\left(\sigma_\frak{a}^{-1}\Gamma\sigma_\frak{a}\right)_\infty = \pm \bpm 1&\Z\\&1\epm.
$$
These two conditions do not determine uniquely $\sigma_\frak{a}$ but up to right multiplication by any element of the group $\bsm 1&\R\\&1\esm$. There is a one-to-one correspondence between subgroups of $\PSL_2(\R)$ and subgroups of $\SL_2(\R)$ that contain $-I=\bsm -1&\\&-1\esm$. We shall always assume that $-I\in\Gamma$.

\section{A concrete realization of the Euler class}

\subsection{Petersson's cocycle}
We review the classical construction investigated by Petersson \cite{Pet0,Pet}, which provides an explicit bounded Euler class representative $\omega$. For any $z\in\h$, set
\begin{align}\label{omega}
\omega(g,h) = \frac{1}{2\pi i}\left( \log j(g,h z) + \log j(h, z) - \log j(gh,z)\right),
\end{align}
where $\log$ denotes the principal branch of the logarithm, that is $\log(cz+d) = \ln\abs{cz+d} +i \arg(cz+d)$ for $-\pi<\arg(cz+d)\leq\pi$, and where $j(g,z)=cz+d$, for $g=\bsm *&*\\ c&d\esm\in\SL_2(\R)$, is the usual automorphy factor, which satisfies
\begin{align}\label{cocycle}
j(gh, z) = j(g,h z)j(h,z).
\end{align}

\begin{lm} The function $\omega$ defines a bounded 2-cocycle representative of the Euler class.
\end{lm}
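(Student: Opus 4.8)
The plan is to verify directly that $\omega$ is a $2$-cocycle, that its cohomology class does not depend on the auxiliary point $z\in\h$, that it is bounded, and finally that it is a nonzero multiple of the Euler class (hence, after the standard normalization, a representative of it). First I would check the cocycle identity: for $g,h,k\in\SL_2(\R)$, the coboundary $(\delta\omega)(g,h,k)=\omega(h,k)-\omega(gh,k)+\omega(g,hk)-\omega(g,h)$ should vanish. Expanding each term via \eqref{omega} and using the automorphy relation \eqref{cocycle} in the form $j(gh,kz)=j(g,hkz)j(h,kz)$, every $\log j(\cdot,\cdot)$ term cancels in pairs \emph{except} that $\log$ is only additive modulo $2\pi i$; so what one really gets is that $\delta\omega$ is an \emph{integer-valued} $2$-cocycle, and one must argue it is identically zero. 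The clean way is to note $\omega$ takes values in a bounded interval (see below), so $\delta\omega$ takes values in a bounded set of integers, and then use that a bounded integer-valued coboundary of a bounded cochain on a group with no nontrivial bounded cohomology in the relevant degree... — more simply, one evaluates $\delta\omega$ on the identity or uses continuity in $z$ to pin it to $0$. I would actually phrase it as: $2\pi i\,\delta\omega(g,h,k)$ is a sum of logs of the form $\log(w_1)+\log(w_2)-\log(w_1w_2)$, each an integer multiple of $2\pi i$ determined by the arguments, and a short case analysis on the signs of the arguments shows the total is $0$.

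Next, independence of $z$: fix $z_0,z_1\in\h$ and set $\mu(g)=\frac{1}{2\pi i}(\log j(g,z_1)-\log j(g,z_0))$, a bounded $1$-cochain (bounded because $\arg j(g,z)$ ranges in $(-\pi,\pi]$, so each term is $O(1)$ once the real parts $\ln|j(g,z_i)|$ are seen to cancel against each other up to a bounded amount — actually $\ln|j(g,z_1)|-\ln|j(g,z_0)|$ need not be bounded, so instead I would argue directly that $\omega_{z_1}-\omega_{z_0}=\delta\mu$ by expanding, and that $\omega_{z_1}-\omega_{z_0}$ is visibly bounded while $\delta\mu$ differs from a bounded function by the coboundary of the \emph{unbounded} part, forcing that unbounded part to contribute $0$). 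The upshot is $[\omega]$ is a well-defined class in $H^2(\SL_2(\R);\R)$, or better in bounded cohomology once boundedness is established.

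For boundedness: writing $j(g,w)=|j(g,w)|e^{i\arg}$, the real parts $\ln|j(g,hz)|+\ln|j(h,z)|-\ln|j(gh,z)|$ cancel exactly by \eqref{cocycle} (the modulus is genuinely multiplicative), so $2\pi i\,\omega(g,h)=i\big(\arg j(g,hz)+\arg j(h,z)-\arg j(gh,z)\big)$ with each argument in $(-\pi,\pi]$; hence $|\omega(g,h)|\le \tfrac{3}{2}$. Finally, to identify the class: $H^2(\SL_2(\R);\R)\cong H^2(\SL_2(\R);\Z)\otimes\R$ is one-dimensional, spanned by the Euler class $e$, so it suffices to show $[\omega]\neq 0$. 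I would do this by restricting to the circle subgroup $K=\mathrm{SO}(2)$ (or rather its image), where $j(k_\theta,i)=e^{-i\theta}$, and compute that $\omega$ restricted to $K$, after passing to $\h$-independence, detects the nontrivial generator of $H^2(\mathrm{SO}(2);\Z)=\Z$ — equivalently, that the central extension pulled back from $\omega$ over $K\cong\mathrm{SO}(2)$ is the connected double (resp. universal) cover, which is nontrivial. Concretely, integrating the rotation number / computing $\omega$ around a full loop $\theta\in[0,2\pi)$ yields $\pm1$, the Euler number of the circle bundle; matching orientations and the normalization in \eqref{omega} gives $[\omega]=e$.

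The main obstacle I expect is the cocycle verification, or more precisely the bookkeeping of branch cuts: the naive cancellation only holds mod $2\pi i$, and turning "$\delta\omega\in\Z$" into "$\delta\omega=0$" requires either a careful sign analysis of the three arguments involved or an appeal to continuity/density (the set of $(g,h,k)$ where no argument hits the branch cut $\arg=\pi$ is open dense, $\delta\omega$ is continuous there and integer-valued hence locally constant, and a single evaluation — e.g. near the identity, where all logs vanish — pins it to $0$, then one extends by the cocycle identity being a closed condition). I would present the continuity argument as the cleanest route, relegating the explicit identification with $e$ to a comparison with the well-known rotation-number cocycle on $\widetilde{\mathrm{Homeo}^+(S^1)}$ restricted to $\widetilde{\PSL_2(\R)}$.
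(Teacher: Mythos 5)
There is a genuine gap, and it sits exactly at the point you flagged as the ``main obstacle.'' The paper's proof hinges on one clean observation that your proposal never uses: by the automorphy relation \eqref{cocycle} the moduli cancel exactly, so the bracketed expression in \eqref{omega} is purely imaginary, i.e.\ $\omega$ is real-valued; since it is also holomorphic in $z$, it is \emph{constant} in $z$. This gives exact (not just up-to-coboundary) independence of the base point. Your treatment of $z$-independence only produces $\omega_{z_1}-\omega_{z_0}=\delta\mu$ with an unbounded $1$-cochain $\mu$, and the attempted repair (``forcing that unbounded part to contribute $0$'') is not an argument; so you obtain at best a well-defined class, not the cocycle identity or the explicit values used later (Theorem 2.1 and Corollary 2.2 need the specific cocycle, not its class). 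Worse, the cocycle identity itself is essentially \emph{equivalent} to exact $z$-independence: evaluating all four terms of $\delta\omega(g,h,k)$ at the same $z$, the uncancelled terms are precisely $\omega(g,h)$ computed at $kz$ minus $\omega(g,h)$ computed at $z$. Once constancy in $z$ is known, the identity follows by a two-line cancellation (evaluate $\omega(g_1,g_2)$ at $g_3z$); without it, your fallback continuity/density argument does not close the gap, because ``integer-valued and locally constant off the branch-cut locus'' does not imply globally constant --- $\omega$ itself is locally constant off such a locus and still takes the three values $-1,0,1$, so pinning $\delta\omega$ to $0$ near the identity says nothing about other components.

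What you do have right: the boundedness argument (real parts cancel, each argument lies in $(-\pi,\pi]$, so $\abs{\omega}\le\frac32$), the integrality of $\omega$, and hence that $\omega$ takes values in $\{-1,0,1\}$ --- this matches the paper. Your identification of $[\omega]$ with the Euler class by restricting to $\mathrm{SO}(2)$ and computing a rotation number is a legitimate route, and in fact more explicit than the paper's rather terse appeal to the one-dimensionality of $\mathrm{H}^2(\SL_2(\R))$; but it only becomes available after the two structural points above are fixed. The repair is short: insert the real-valued-plus-holomorphic-implies-constant step, deduce exact independence of $z$, and then both the cocycle identity and the later explicit evaluations follow by direct computation.
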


\begin{proof}
Observe that (\ref{cocycle}) implies that the LHS of (\ref{omega}) is real-valued. Since it is also holomorphic in $z$, it must be constant; $\omega$ is indeed independent of the choice of $z$. By definition, $\omega$ can only take integer values. Since moreover $\abs{\omega(g,h)}\leq \frac{3}{2}$, we conclude that $\omega$ only takes values in $\{-1,0,1\}$. Finally, one can check by direct computation that
$$
\omega(g_1, g_2) + \omega(g_1 g_2,g_3) = \omega(g_1,g_2 g_3) + \omega(g_2, g_3).
$$
Hence $[\omega]\in\mathrm{H}^2(\SL_2(\R))$ and we conclude from the previous observations that $[\omega]$ coincides with the Euler class.
\end{proof}

\subsection{Computing values of $\omega$}
The 2-cocycle (\ref{omega}) can be computed on explicit elements. Asai provides a clean formula to do so \cite[Thm.~2]{Asai}, simplifying the laborious presentation of Petersson \cite{Pet}. However, his formula is obtained under the unusual choice $-\pi\leq\arg(cz+d)<\pi$. By a simple reparametrization, we recover that formula for the principal branch of the logarithm. 
\begin{thm}\label{omega values}
Set
\begin{align*}
c(-d)= \begin{dcases} c & c\neq0,\\ -d & c=0,
\end{dcases}
\quad
\text{ and }
\quad
\sign(x) = \begin{dcases} 1 & x>0, \\ 0 & x=0, \\ -1 & x<0.\end{dcases}
\end{align*}
Then 
\begin{align}\label{explicit}
\omega(g,h)=\frac{1}{4}(\sign(c_g(-d_g))+ &\sign(c_h(-d_h)) \nonumber \\ &-\sign(c_{gh}(-d_{gh})) -\sign(c_g(-d_g) c_\tau(-d_h) c_{gh}(-d_{gh}))).
\end{align}
\end{thm}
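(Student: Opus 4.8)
The plan is to deduce \eqref{explicit} from Asai's formula \cite[Thm.~2]{Asai} by tracking the effect of changing the branch of the logarithm. Write $\log_A$ for the branch with argument in $[-\pi,\pi)$ used by Asai, and let $\omega_A$ denote the $2$-cocycle defined exactly as in \eqref{omega}, but with $\log_A$ in place of the principal branch $\log$. Asai's theorem provides a closed formula for $\omega_A$: it has the same shape as \eqref{explicit}, except that the quantity playing the role of $c(-d)$ takes the value $d$, rather than $-d$, when $c=0$ --- the difference of convention at $c=0$ reflecting the shifted branch cut, and this is exactly the reparametrization referred to in the statement.

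First I would compare $\omega$ with $\omega_A$. The two branches agree away from the negative real axis, and $\log w-\log_A w=2\pi i$ for $w\in\R_{<0}$. Fixing any $z\in\h$ (the value of $\omega$ being independent of that choice, as shown above), this gives
\begin{align*}
\omega(g,h)-\omega_A(g,h)=\1_{\R_{<0}}\!\big(j(g,hz)\big)+\1_{\R_{<0}}\!\big(j(h,z)\big)-\1_{\R_{<0}}\!\big(j(gh,z)\big).
\end{align*}
Since $\im\big(j(k,z)\big)=c_k\,\im(z)$ for $k\in\SL_2(\R)$, the automorphy factor $j(k,z)=c_kz+d_k$ is a negative real number precisely when $c_k=0$ and $d_k<0$; in particular this condition is independent of $z$. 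Setting $\varepsilon(k):=\1[c_k=0,\ d_k<0]$, we obtain $\omega-\omega_A=\varepsilon(g)+\varepsilon(h)-\varepsilon(gh)$, so that $\omega$ and $\omega_A$ differ by the coboundary of $\varepsilon$, consistent with both representing the Euler class.

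It then remains to check that adding $\varepsilon(g)+\varepsilon(h)-\varepsilon(gh)$ to Asai's formula produces \eqref{explicit}. Note first that each $c_k(-d_k)$ is nonzero (it equals $c_k$ if $c_k\neq0$, and $-d_k\neq0$ if $c_k=0$, since then $a_kd_k=1$), so every $\sign$ occurring in \eqref{explicit} equals $\pm1$; moreover $\varepsilon(k)=1$ exactly when $c_k=0$ and $\sign(c_k(-d_k))=+1$, so the correction only affects the ``upper-triangular'' entries and precisely interchanges the two $c=0$ conventions. The verification is then a short case analysis according to which of $c_g,c_h,c_{gh}$ vanish: if none vanishes the two formulas already agree; if two of them vanish then so does the third (for instance $c_g=c_{gh}=0$ forces $h=g^{-1}(gh)$ to be upper triangular); and in each remaining case the identities $c_{gh}=c_ga_h+d_gc_h$ and $d_{gh}=c_gb_h+d_gd_h$, together with $a_kd_k-b_kc_k=1$ and the multiplicativity of $\sign$, reduce the claim to a single sign identity --- for example $\sign(c_g)\sign(c_h)=\sign(c_{gh})$ when $c_{gh}$ alone vanishes.

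I do not expect a conceptual obstacle; the content is entirely bookkeeping. The two places that require care are getting the direction of the $2\pi i$ jump correct in the comparison step (an error there would flip a sign in \eqref{explicit}), and organising the case analysis of the last step --- both routine given \cite[Thm.~2]{Asai}.
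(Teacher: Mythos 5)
Your proposal is correct, but it executes the ``reparametrization'' differently from the paper's own proof. The paper never passes through Asai's cocycle: it proves \eqref{explicit} directly from the definition \eqref{omega}, by writing $\log(cz+d)=\log\bigl(\tfrac{cz+d}{i\,\sign(c(-d))}\bigr)+i\tfrac{\pi}{2}\sign(c(-d))$ (the rotated factor has argument in $(-\pi/2,\pi/2]$) and checking by inspection that the three rotated logarithms sum to $-i\tfrac{\pi}{2}\sign\bigl(c_g(-d_g)\,c_h(-d_h)\,c_{gh}(-d_{gh})\bigr)$, so the formula drops out in one step and Asai's theorem serves only as motivation. You instead take Asai's Theorem 2 as a black box, note that the two branches differ by $2\pi i$ exactly on $\R_{<0}$, deduce (using $\im j(k,z)=c_k\im z$, so the correction is $z$-independent) that $\omega-\omega_A$ is the coboundary of $\varepsilon(k)=\1[c_k=0,\ d_k<0]$, and then check that adding this coboundary converts the $c(d)$-convention into the $c(-d)$-convention; I verified that this bookkeeping does close up in every case (all of $c_g,c_h,c_{gh}$ nonzero; exactly one of them zero, in each position; two zero forcing all three zero), so there is no gap. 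The trade-off: the paper's argument is self-contained and shorter, while yours isolates the conceptually pleasant point that changing the branch changes the Euler cocycle by an explicit coboundary, at the cost of importing Asai's formula, which carries essentially the same content as the statement being proved. One small correction to your sketch: in the case where $c_{gh}$ alone vanishes, the identity you quote, $\sign(c_g)\sign(c_h)=\sign(c_{gh})$, cannot be right since its right-hand side is $0$; the relation that makes this case work is $\sign(d_{gh})=-\sign(c_g c_h)$, which follows from $c_h=-c_g a_{gh}$ and $a_{gh}d_{gh}=1$ (and, in the cases where $c_g$ or $c_h$ alone vanishes, the analogous relations $\sign(c_{gh})=\sign(d_g c_h)$, resp.\ $\sign(c_{gh})=\sign(c_g a_h)$ with $\sign(a_h)=\sign(d_h)$).
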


\begin{coro}\label{corol}
The explicit values of $\omega(g,h)$ are given by the following table.
\begin{align}\label{values}
\begin{array}{ccc|c}
\sign\left(c_g(-d_g)\right) & \sign\left(c_h(-d_h)\right) & \sign\left(c_{gh}(-d_{gh})\right) & \omega(g,h)\\
\hline
1 & 1 & -1 & 1\\
-1 & -1 & 1 & -1\\
&\text{otherwise}&& 0
\end{array}
\end{align}
\end{coro}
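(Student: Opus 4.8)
The plan is to read the table off directly from the explicit formula \eqref{explicit} of \cref{omega values}. Set $a=\sign(c_g(-d_g))$, $b=\sign(c_h(-d_h))$, and $e=\sign(c_{gh}(-d_{gh}))$, so that \eqref{explicit} becomes $\omega(g,h)=\tfrac14\bigl(a+b-e-\sign(abe)\bigr)$. The first thing I would record is that none of $a,b,e$ can be zero: for $g=\bsm a_g&b_g\\ c_g&d_g\esm\in\SL_2(\R)$ the relation $a_gd_g-b_gc_g=1$ prevents $c_g$ and $d_g$ from vanishing simultaneously, so by the very definition of $c_g(-d_g)$ one has $c_g(-d_g)\neq0$ and hence $a\in\{-1,1\}$; the same argument applied to $h$ and to $gh$ gives $b,e\in\{-1,1\}$. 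Consequently $abe\in\{-1,1\}$, so $\sign(abe)=abe$ and
\[
\omega(g,h)=\tfrac14\bigl(a+b-e-abe\bigr)=\tfrac14\bigl((a+b)-e(1+ab)\bigr).
\]

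Next I would split on whether $a=b$. If $a\neq b$, then $a+b=0$ and $1+ab=0$, so $\omega(g,h)=0$ independently of $e$ --- these are the four ``otherwise'' configurations with $\{a,b\}=\{1,-1\}$. If $a=b$, then $a+b=2a$ and $1+ab=2$, whence $\omega(g,h)=\tfrac12(a-e)$; this equals $1$ precisely for $(a,b,e)=(1,1,-1)$, equals $-1$ precisely for $(a,b,e)=(-1,-1,1)$, and equals $0$ in the two remaining cases $a=b=e$. Assembling these eight outcomes yields exactly the displayed table.

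The whole argument is a finite, elementary verification, so there is no genuine obstacle; the only mildly subtle point --- and the reason the ``otherwise'' line of the table need not itemize the configurations in which one of the three signs is $0$ --- is the initial observation that the determinant-one condition forces each of $a$, $b$, $e$ to lie in $\{-1,1\}$, which collapses the a priori $27$ sign patterns to $8$.
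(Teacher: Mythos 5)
Your argument is correct and follows the same route as the paper: the paper's proof establishes the explicit formula \eqref{explicit} and then declares the table ``easy to complete,'' which is precisely the finite case check you carry out. Your one substantive addition --- that $c_g(-d_g)\neq 0$ for any element of $\SL_2(\R)$, so all three signs lie in $\{\pm1\}$ and the check reduces to eight cases --- is exactly the observation needed to make that step rigorous, and your algebra $\omega(g,h)=\tfrac14\bigl((a+b)-e(1+ab)\bigr)$ handles it cleanly.
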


\begin{proof}
One can check directly the validity of the identity
$$
\log(cz+d) = \log\left(\frac{cz+d}{i\ \sign(c(-d))}\right) +i\frac{\pi}{2}\sign\left(c(-d)\right).
$$
In showing by inspection that
$$
\log\left(\frac{j(g,h z)}{i\ \sign(c_g(-d_g))}\right) + \log\left(\frac{j(h, z)}{i\ \sign(c_h(-d_h))}\right) - \log\left(\frac{j(gh, z)}{i\ \sign(c_{gh}(-d_{g}))}\right)
$$
$$ = -i\frac{\pi}{2}\sign(c_g(-d_g) c_\tau(-d_h) c_{gh}(-d_{gh})),
$$
we obtain the formula (\ref{explicit}), from which it is easy to complete table (\ref{values}).
\end{proof}

\section{Dedekind symbols and reciprocity}

\subsection{Definition of Dedekind symbols}
Let $\Gamma$ be a cofinite Fuchsian group with a cusp $\frak{a}$ and fix a scaling $\sigma_\frak{a}$. The Dedekind symbol $\mathcal{S}_\frak{a}$ is defined on non-trivial double cosets of $\Gamma_\frak{a}\backslash\Gamma\slash\Gamma_\frak{a}$ by
\begin{align*}
\mathcal{S}_\frak{a}\left([\![\gamma]\!]\right) = \frac{\vol(\Gamma\backslash\h)}{4\pi}\frac{a_\gamma+d_\gamma}{c_\gamma} - \rho_\frak{a}\bpm a_\gamma&b_\gamma\\ c_\gamma&d_\gamma\epm - \frac14 \sign(c_\gamma)
\end{align*}
for $\bsm a_\gamma&b_\gamma\\ c_\gamma&d_\gamma\esm = \sigma_\frak{a}^{-1}\gamma\sigma_\frak{a}$, and $\rho_\frak{a}:\sigma_\frak{a}^{-1}\Gamma\sigma_\frak{a}\to\R$ satisfying
\begin{align}\label{AGR}
\rho_\frak{a}(\gamma\tau) - \rho_\frak{a}(\gamma) - \rho_\frak{a}(\tau) = \omega(\gamma,\tau),
\end{align}
where $\omega$ is the bounded 2-cocycle given by (\ref{omega}), and the definition does not depend on the particular scaling $\sigma_\frak{a}$ \cite[Thm.~2]{moi}. We will not review here the construction of $\rho_\frak{a}$ or the connection of Dedekind symbols to automorphic forms (in the form of generalized $\log\eta$-functions). Instead we will highlight the relevant features of this definition.

The most important characteristic of this definition is that the Dedekind symbols factor through (non-trivial) double cosets in $\Gamma_\frak{a}\backslash\Gamma\slash\Gamma_\frak{a}$. By conjugation with $\sigma_\frak{a}$, a double coset representative $\gamma$ of $[\![\gamma]\!]$ gets sent to
\begin{align}\label{dcd}
\left(\sigma_\frak{a}^{-1}\Gamma_\frak{a}\sigma_\frak{a}\right)\bpm a_\gamma&b_\gamma\\ c_\gamma&d_\gamma \epm \left(\sigma_\frak{a}^{-1}\Gamma_\frak{a}\sigma_\frak{a}\right) = \pm\bpm a_\gamma+c_\gamma\Z&*\\ c_\gamma&d_\gamma+c_\gamma\Z\epm. 
\end{align}
In consequence, we observe that $[\![\gamma]\!]$ is non-trivial if and only if $c_\gamma\neq0$.  Moreover, one can always choose a representative $\gamma$ of $[\![\gamma]\!]$ such that $c_\gamma>0$, and $0\leq a_\gamma,d_\gamma<c_\gamma$. These two simple observations are primordial in establishing the Dedekind symbol as the natural generalization of the Dedekind sums. (Of course, the two definitions coincide on $\SL_2(\Z)$.)

\subsection{Equivalent definition}
We show that alternatively, and equivalently, the Dedekind symbol $\mathcal{S}_\frak{a}$ can be defined as a periodic function on the orbit (or cusp set) $\mathcal{O}'=\Gamma.\frak{a}\setminus\{\frak{a}\}$. Define an equivalence relation on $\mathcal{O}$ that identifies $x,y\in\mathcal{O}$ if there exists some $\gamma\in\Gamma_\frak{a}$ such that $\gamma x=y$. We then write $[x]=[y]$.

\begin{thm}
For any $\gamma\in\Gamma$ such that $[\![\gamma]\!]$ is non-trivial, $\mathcal{S}_\frak{a}([\gamma.\frak{a}]) = \mathcal{S}_\frak{a}([\![\gamma]\!]).$
\end{thm}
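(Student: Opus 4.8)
The plan is to establish the bijection asserted in \cref{THM0}: the map sending a double coset $[\![\gamma]\!]=\Gamma_\frak{a}\gamma\Gamma_\frak{a}$ to the class $[\gamma.\frak{a}]$ is a well-defined bijection between non-trivial double cosets in $\Gamma_\frak{a}\backslash\Gamma\slash\Gamma_\frak{a}$ and $\sim$-equivalence classes of points of $\mathcal{O}'=\Gamma.\frak{a}\setminus\{\frak{a}\}$; the identity for $\mathcal{S}_\frak{a}$ is then just the statement that $\mathcal{S}_\frak{a}$, which is already a function of $[\![\gamma]\!]$, may be read off from $[\gamma.\frak{a}]$. Throughout I will use that $-I\in\Gamma_\frak{a}$ acts trivially on $\overline{\h}$, so that both the $\Gamma_\frak{a}$-action on $\partial\h$ and the equivalence relation on $\mathcal{O}$ factor through $\PSL_2(\R)$.

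First I would check that $\Phi:[\![\gamma]\!]\mapsto[\gamma.\frak{a}]$ is well defined: if $\gamma'=\alpha\gamma\beta$ with $\alpha,\beta\in\Gamma_\frak{a}$, then $\beta.\frak{a}=\frak{a}$ gives $\gamma'.\frak{a}=\alpha.(\gamma.\frak{a})$, hence $[\gamma'.\frak{a}]=[\gamma.\frak{a}]$. Next, recall the observation following (\ref{dcd}) that $[\![\gamma]\!]$ is non-trivial exactly when $c_\gamma\neq0$, i.e. exactly when $\gamma\notin\Gamma_\frak{a}$, i.e. exactly when $\gamma.\frak{a}\neq\frak{a}$; thus $\Phi$ carries non-trivial double cosets into $\mathcal{O}'/\!\sim$, and conversely any $x\in\mathcal{O}'$ equals $\gamma.\frak{a}$ for some $\gamma\in\Gamma\setminus\Gamma_\frak{a}$, which yields surjectivity. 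For injectivity, suppose $[\gamma.\frak{a}]=[\gamma'.\frak{a}]$, so that $\alpha\gamma.\frak{a}=\gamma'.\frak{a}$ for some $\alpha\in\Gamma_\frak{a}$; then $(\gamma')^{-1}\alpha\gamma$ fixes $\frak{a}$, hence equals some $\beta\in\Gamma_\frak{a}$, and rearranging gives $\gamma'=\alpha\gamma\beta^{-1}$, that is $[\![\gamma']\!]=[\![\gamma]\!]$.

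Combining these three points, $[\![\gamma]\!]$ is uniquely determined by $[\gamma.\frak{a}]\in\mathcal{O}'/\!\sim$, so defining $\mathcal{S}_\frak{a}([\gamma.\frak{a}]):=\mathcal{S}_\frak{a}([\![\gamma]\!])$ is unambiguous, which is precisely the claim. I do not expect a genuine obstacle here; the only steps requiring a moment of care are the equivalence ``non-trivial double coset $\Leftrightarrow\ \gamma\notin\Gamma_\frak{a}$'' (recorded already in the text right after (\ref{dcd})) and keeping track of which side $\Gamma_\frak{a}$ acts on when translating between the double-coset picture and the orbit picture.
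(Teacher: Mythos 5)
Your argument is correct: the map $[\![\gamma]\!]\mapsto[\gamma.\frak{a}]$ is well defined because $\Gamma_\frak{a}$ fixes $\frak{a}$, it lands in $\mathcal{O}'/\!\sim$ by the equivalence ``$[\![\gamma]\!]$ non-trivial $\Leftrightarrow c_\gamma\neq0 \Leftrightarrow \gamma\notin\Gamma_\frak{a}$'', and injectivity follows from the fact that $\Gamma_\frak{a}$ is the \emph{full} isotropy subgroup of $\frak{a}$ in $\Gamma$, which is exactly how the paper sets things up. Your route is, however, phrased differently from the paper's: you give the abstract, coordinate-free orbit--double-coset correspondence (essentially $\Gamma_\frak{a}\backslash\Gamma/\Gamma_\frak{a}\leftrightarrow \Gamma_\frak{a}\backslash(\Gamma/\Gamma_\frak{a})$ with $\Gamma/\Gamma_\frak{a}\cong\Gamma.\frak{a}$ via the stabilizer), whereas the paper works in the $\sigma_\frak{a}$-conjugated picture and computes with matrix representatives: using the explicit shape (\ref{dcd}) of a double coset, it shows that a representative is determined by its first column (or first row), and hence that non-trivial double cosets correspond to the points $a_\gamma/c_\gamma \bmod 1$ of the conjugated cusp orbit. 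The two arguments establish the same bijection; yours is cleaner and does not need the scaling at all, while the paper's computation yields as a byproduct the concrete coordinate description $[\gamma.\frak{a}]\leftrightarrow a_\gamma/c_\gamma\bmod 1$, which is what is actually invoked later (e.g.\ for $\SL_2(\Z)$, coprime pairs versus $\Q$, and for Hecke triangle groups, cusp points mod $\lambda_q$). If you wanted your version to serve those later uses, you would add one line translating your bijection through $\sigma_\frak{a}$ to recover that description; as a proof of the stated theorem, nothing is missing.
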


\begin{proof}
Consider the double coset on the LHS of (\ref{dcd}). Let $\bsm a_\gamma&b\\ c_\gamma&d\esm$, $\bsm a' & b'\\ c_\gamma&d_\gamma\esm$ be two other representative of $[\![\gamma]\!]$. Then 
$$
\bpm a_\gamma & b \\ c_\gamma & d\epm^{-1}\bpm a_\gamma & b_\gamma \\ c_\gamma & d_\gamma\epm,\ \bpm a_\gamma & b_\gamma \\ c_\gamma & d_\gamma\epm \bpm  a' & b' \\ c_\gamma & d_\gamma\epm^{-1} = \bpm 1 & * \\ &1\epm \in\left(\sigma_\frak{a}^{-1}\Gamma_\frak{a}\sigma_\frak{a}\right).
$$ 
That is, any double coset representative of $[\![\gamma]\!]$ is completely determined by either its first column or its first row. Therefore, the double cosets $[\![\bsm a_\gamma &*\\ c_\gamma &*\esm]\!] \in (\sigma_\frak{a}^{-1}\Gamma_\frak{a}\sigma_\frak{a})\backslash(\sigma_\frak{a}^{-1}\Gamma\sigma_\frak{a})\slash(\sigma_\frak{a}^{-1}\Gamma_\frak{a}\sigma_\frak{a})$ are in one-to-one correspondence with the cusp points $[a_\gamma/c_\gamma]=a_\gamma/c_\gamma\mod 1$ in  $(\sigma_\frak{a}^{-1}\Gamma_\frak{a}\sigma_\frak{a})\backslash(\sigma_\frak{a}^{-1}\Gamma\sigma_\frak{a}).\infty$.
\end{proof}

\subsection{Selected properties}

\begin{lm}\label{sign}
 For any non-trivial double coset $[\![\gamma]\!]$, $\mathcal{S}_\frak{a}(-[\![\gamma]\!])=\mathcal{S}_\frak{a}([\![\gamma]\!]).$
\end{lm}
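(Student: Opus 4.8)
The plan is to show that the three ingredients of $\mathcal{S}_\frak{a}$ are each insensitive to multiplying the representative by $-I$. Write $\gamma'$ for a representative of $[\![\gamma]\!]$ and consider $-\gamma'$, which represents $-[\![\gamma]\!]$ (note $-I\in\Gamma_\frak{a}$, so both double cosets make sense, and conjugation by $\sigma_\frak{a}$ commutes with negation). Set $\bsm a&b\\ c&d\esm=\sigma_\frak{a}^{-1}\gamma'\sigma_\frak{a}$, so $\sigma_\frak{a}^{-1}(-\gamma')\sigma_\frak{a}=\bsm -a&-b\\ -c&-d\esm$. The first term $\frac{V}{4\pi}\frac{a+d}{c}$ is manifestly invariant under $(a,b,c,d)\mapsto(-a,-b,-c,-d)$, and so is the sign term $\frac14\sign(c)=\frac14\sign(-c)$? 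No --- $\sign(-c)=-\sign(c)$, so that term changes sign; hence the statement forces $\rho_\frak{a}(-g)=\rho_\frak{a}(g)-\tfrac12\sign(c_g)$ type behavior, and the real content of the lemma is a cocycle identity for $\rho_\frak{a}$ under negation. Let me restart the bookkeeping: the claim $\mathcal{S}_\frak{a}(-[\![\gamma]\!])=\mathcal{S}_\frak{a}([\![\gamma]\!])$ is equivalent, after cancelling the first term, to
\begin{align*}
\rho_\frak{a}(-g)-\rho_\frak{a}(g)=-\tfrac14\big(\sign(-c)-\sign(c)\big)=\tfrac12\sign(c),
\end{align*}
where $g=\sigma_\frak{a}^{-1}\gamma'\sigma_\frak{a}$; wait, I should double check the sign, but in any case it is a clean two-line identity once the relation $\rho_\frak{a}(-g)-\rho_\frak{a}(g)$ is pinned down.

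So the first real step is to compute $\rho_\frak{a}(-I)$ and then $\rho_\frak{a}(-g)-\rho_\frak{a}(g)$ purely from the defining relation \eqref{AGR} together with the explicit cocycle values in \cref{corol}. Taking $\gamma=\tau=-I$ in \eqref{AGR} gives $\rho_\frak{a}(I)-2\rho_\frak{a}(-I)=\omega(-I,-I)$; using $\omega(g,I)$-type normalization (which follows from the cocycle identity, giving $\rho_\frak{a}(I)=-\omega(I,I)$, and $\omega(I,I)=0$ from the table since $c_I(-d_I)=c_{I\cdot I}(-d_{I})$ all equal $-1\cdot(-1)$... actually $c=0,d=1$ so $c(-d)=-1<0$, giving the pattern $(-1,-1,-1)$, which is "otherwise", so $\omega(I,I)=0$), and $\omega(-I,-I)$: here $-I$ has $c=0$, $d=-1$, so $c(-d)=0\cdot 1=0$, hmm, $c=0$ forces the convention $c(-d)=-d=1>0$, so the pattern is $(1,1,?)$ where $(-I)(-I)=I$ has pattern value $-1$, giving $(1,1,-1)\mapsto\omega=1$. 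So $\rho_\frak{a}(-I)=-\tfrac12$. Then taking $\gamma=-I$, $\tau=g$ in \eqref{AGR}: $\rho_\frak{a}(-g)-\rho_\frak{a}(-I)-\rho_\frak{a}(g)=\omega(-I,g)$, so $\rho_\frak{a}(-g)-\rho_\frak{a}(g)=-\tfrac12+\omega(-I,g)$. The remaining task is to evaluate $\omega(-I,g)$ via the table: $-I$ contributes $\sign(0\cdot 1)$ with the $c=0$ convention giving $\sign(1)=1$; $g$ contributes $\sign(c_g(-d_g))$; and $-g$ contributes $\sign(-c_g\cdot d_g)$ with the convention, i.e. for $c_g\neq0$ this is $\sign(-c_g\cdot(-(-d_g)))=\dots$ --- one just reads off the three cases $c_g>0$, $c_g<0$, $c_g=0$ from \cref{corol}.

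The main obstacle --- really the only subtlety --- is the careful handling of the $c=0$ convention in $c(-d)$ and the edge cases when $c_g=0$; but on a \emph{non-trivial} double coset we have $c_\gamma\neq0$ by the discussion around \eqref{dcd}, so $g=\sigma_\frak{a}^{-1}\gamma'\sigma_\frak{a}$ has $c_g\neq0$ and the $c_g=0$ case never arises, simplifying matters. I expect to find $\omega(-I,g)=\tfrac12+\tfrac12\sign(c_g)$-ish so that $\rho_\frak{a}(-g)-\rho_\frak{a}(g)$ comes out to exactly the $\tfrac12\sign(c_g)$ (up to my sign sloppiness above) needed to kill the change in the $\tfrac14\sign(c)$ term. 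Assembling: $\mathcal{S}_\frak{a}(-[\![\gamma]\!])-\mathcal{S}_\frak{a}([\![\gamma]\!])=-\big(\rho_\frak{a}(-g)-\rho_\frak{a}(g)\big)-\tfrac14\big(\sign(-c_g)-\sign(c_g)\big)=0$, which is the claim. I would present this as: (1) show $\rho_\frak{a}(-I)=-\tfrac12$; (2) deduce $\rho_\frak{a}(-g)=\rho_\frak{a}(g)-\tfrac12+\omega(-I,g)$; (3) evaluate $\omega(-I,g)$ from \cref{corol} using $c_g\neq 0$; (4) substitute into the definition of $\mathcal{S}_\frak{a}$ and observe everything cancels.
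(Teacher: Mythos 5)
Your proof is correct and follows essentially the same route as the paper: apply \eqref{AGR} to the pair $(-I,g)$, compute $\rho_\frak{a}(-I)=-\tfrac12$ from the table in \cref{corol}, evaluate $\omega(-I,g)$, and substitute into the definition of $\mathcal{S}_\frak{a}$, so that $\rho_\frak{a}(-g)-\rho_\frak{a}(g)=\tfrac12\sign(c_g)$ exactly cancels the change in the $\tfrac14\sign(c_\gamma)$ term. One remark: your evaluation $\omega(-I,g)=\tfrac12+\tfrac12\sign(c_g)$ (that is, $1$ for $c_g>0$ and $0$ for $c_g<0$) is the correct reading of the table, whereas the paper's printed proof asserts $\omega(-I,g)=0$ after normalizing $c_\gamma>0$; that line appears to be a slip (with the value $0$ the two sides would differ by $1$), so your bookkeeping is the one that actually closes the argument, and handling both signs of $c_g$ rather than normalizing is a harmless variation.
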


\begin{proof}
Following the discussion above, we may assume that $c_\gamma>0$. Using (\ref{AGR}),
$$
\rho_\frak{a}\left(-\bsm a_\gamma&b_\gamma\\ c_\gamma&d_\gamma\esm\right) -\frac14 \sign(-c_\gamma) =\rho_\frak{a}(-I) + \rho_\frak{a}\bsm a_\gamma&b_\gamma\\c_\gamma&d_\gamma\esm +\omega\left(-I,\bsm a_\gamma&b_\gamma\\c_\gamma&d_\gamma\esm\right) +\frac14. 
$$
With (\ref{values}), we can check that 
\begin{align*}
\rho_\frak{a}(I) &=-\omega(I,I)=0,\\
2\rho_\frak{a}(-I) &=\rho_\frak{a}(I)-\omega(-I,-I)=-1,\\ 
\text{and } & \omega\left(-I,\bsm a&b\\c&d\esm\right)=0, 
\end{align*}
The statement follows.
\end{proof}

\begin{lm}
For any non-trivial double coset $[\![\gamma]\!]$,  $\mathcal{S}_\frak{a}([\![\gamma]\!]) = -\mathcal{S}_\frak{a}([\![\gamma^{-1}]\!])$.
\end{lm}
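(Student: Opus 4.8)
The plan is to reduce the claim to a direct computation using the defining relation \eqref{AGR} together with the explicit values of $\omega$ from \cref{corol}. Fix a representative $\gamma$ of $[\![\gamma]\!]$; after conjugating by $\sigma_\frak{a}$ we may write $g=\bsm a_\gamma&b_\gamma\\ c_\gamma&d_\gamma\esm$ with $c_\gamma\neq 0$, and by \cref{sign} we may further assume $c_\gamma>0$. Its inverse is $g^{-1}=\bsm d_\gamma&-b_\gamma\\ -c_\gamma&a_\gamma\esm$, so the $(2,1)$-entry of $g^{-1}$ is $-c_\gamma<0$, the trace is $a_\gamma+d_\gamma$ (unchanged), and $\sign(-c_\gamma)=-\sign(c_\gamma)$. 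Thus in the difference $\mathcal{S}_\frak{a}([\![\gamma]\!])+\mathcal{S}_\frak{a}([\![\gamma^{-1}]\!])$ the two automorphy terms $\tfrac{V}{4\pi}\tfrac{a_\gamma+d_\gamma}{c_\gamma}$ cancel against each other, the two sign terms $-\tfrac14\sign(c_\gamma)$ and $-\tfrac14\sign(-c_\gamma)$ cancel, and we are left with
\begin{align*}
\mathcal{S}_\frak{a}([\![\gamma]\!]) + \mathcal{S}_\frak{a}([\![\gamma^{-1}]\!]) = -\rho_\frak{a}(g) - \rho_\frak{a}(g^{-1}).
\end{align*}

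Next I would evaluate $\rho_\frak{a}(g)+\rho_\frak{a}(g^{-1})$ via \eqref{AGR} applied to the pair $(g,g^{-1})$: since $gg^{-1}=I$, this gives $\rho_\frak{a}(I)-\rho_\frak{a}(g)-\rho_\frak{a}(g^{-1})=\omega(g,g^{-1})$, i.e.
\begin{align*}
\rho_\frak{a}(g) + \rho_\frak{a}(g^{-1}) = \rho_\frak{a}(I) - \omega(g,g^{-1}) = -\omega(g,g^{-1}),
\end{align*}
using $\rho_\frak{a}(I)=0$ (already established in the proof of \cref{sign}). It remains to show $\omega(g,g^{-1})=0$. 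Here I invoke \cref{corol}: with $c_g>0$ we have $\sign(c_g(-d_g))=\sign(c_g)=1$ when $c_g\ne 0$ (and one checks the $c_g=0$ case is excluded since the double coset is non-trivial), while the $(2,1)$-entry of $g^{-1}$ is $-c_g<0$ so $\sign(c_{g^{-1}}(-d_{g^{-1}}))=-1$, and $gg^{-1}=I$ has $c=0$, $d=1$, so $\sign(c_{gg^{-1}}(-d_{gg^{-1}}))=\sign(-1)=-1$. The triple $(1,-1,-1)$ falls into the ``otherwise'' row of table \eqref{values}, hence $\omega(g,g^{-1})=0$, and the lemma follows.

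The argument does require a small case-check: if instead one chooses a representative with $c_\gamma<0$, or if one does not normalize the sign at all, one should confirm the identity is independent of that choice — but this is already guaranteed by \cref{sign}, so fixing $c_\gamma>0$ at the outset is legitimate and costs nothing. The only genuine subtlety, and what I expect to be the main (though still minor) obstacle, is the bookkeeping around the $c=0$ convention in the definition of $c(-d)$ and making sure the entries of $g$, $g^{-1}$, and $gg^{-1}=I$ are correctly slotted into the table of \cref{corol}; once that is done carefully the vanishing of $\omega(g,g^{-1})$ is immediate and the rest is the elementary cancellation described above.
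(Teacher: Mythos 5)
Your proof is correct and takes essentially the same route as the paper: apply \eqref{AGR} to the pair $(g,g^{-1})$, read off $\omega(g,g^{-1})=0$ from the table in \cref{corol} (using that $c_\gamma\neq0$ for a non-trivial coset), and let the remaining trace and sign terms cancel. The normalization $c_\gamma>0$ via \cref{sign} is harmless but not needed, since for any $c_\gamma\neq0$ the first two entries in the table have opposite signs and the ``otherwise'' row applies.
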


\begin{proof}
By (\ref{AGR}), $\rho_\frak{a}(g) + \rho_\frak{a}(g^{-1}) =\omega(g,g^{-1})$ and (\ref{values}) yields $\omega(g,g^{-1})=0$. The statement follows.
\end{proof}

\subsection{Reciprocity of Dedekind symbols}
\begin{proof}[Proof of \cref{THM1}]
For $c_\gamma c_\tau c_{\gamma\tau}\neq0$,
\begin{align*}
\frac{a_\gamma + d_\gamma}{c_\gamma} + \frac{a_\tau + d_\tau}{c_\tau} - \frac{a_{\gamma\tau} + d_{\gamma\tau}}{c_{\gamma\tau}} &= \frac{a_\gamma + d_\gamma}{c_\gamma} + \frac{a_\tau + d_\tau}{c_\tau} - \frac{(a_\gamma a_\tau +b_\gamma c_\tau) + (c_\gamma b_\tau +d_\gamma d_\tau)}{c_\gamma a_\tau + d_\gamma c_\tau}\\
&= \frac{c_\gamma^2 + c_\tau^2 + (c_\gamma a_\tau + d_\gamma c_\tau)^2}{c_\gamma c_\tau (c_\gamma a_\tau + d_\gamma c_\tau)} = \frac{c_\gamma^2 + c_\tau^2 + c_{\gamma\tau}^2}{c_\gamma c_\tau c_{\gamma\tau}}
\end{align*}
and thus, by (\ref{AGR}),
\begin{align*}
\mathcal{S}_\frak{a}([\![\gamma]\!])+\mathcal{S}_\frak{a}([\![\tau]\!]) - \mathcal{S}_\frak{a}([\![\gamma\tau]\!]) &=
 \frac{V}{4\pi} \left( \frac{c_\gamma}{  c_{\gamma\tau}c_\tau} + \frac{c_\tau}{c_\gamma c_{\gamma\tau}} + \frac{c_{\gamma\tau}}{c_\tau c_\gamma }\right) + \omega(\gamma,\tau) + \\
 &\qquad \qquad +\frac14\left(\sign(c_{\gamma\tau}) -\sign(c_\gamma) -\sign(c_\tau)\right)\\
 &=  \frac{V}{4\pi} \left( \frac{c_\gamma}{  c_{\gamma\tau}c_\tau} + \frac{c_\tau}{c_\gamma c_{\gamma\tau}} + \frac{c_{\gamma\tau}}{c_\tau c_\gamma }\right)  -\frac14 \sign(c_\gamma c_\tau c_{\gamma\tau})
\end{align*}
by \cref{omega values}.
\end{proof}

\section{Explicit formulas for Hecke triangle groups}

\subsection{Parametrizations}
Fix the scaling
$
\sigma_\infty = \bsm \sqrt{\lambda_q} & \\& 1/\sqrt{\lambda_q} \esm
$
and set 
\begin{align*}
 s & :=\sigma_\infty^{-1} S\sigma_\infty = \bpm & -1/\lambda_q\\ \lambda_q&\epm,\\
  u & := \sigma_\infty^{-1}T^n_q \sigma_\infty = \bpm 1&n\\&1\epm,\\
v & := \sigma_\infty^{-1}S^n_q\sigma_\infty = \bpm 1&\\ n\lambda_q^2&1\epm,\\
  \gamma_q & := \sigma_\infty^{-1} \gamma\sigma_\infty = \bpm a & b/\lambda_q \\ c\lambda_q & d\epm,
\end{align*}
for any $n\in\Z$ and any $\gamma=\bsm a&b\\c&d\esm\in G_q$. We record the following formulas for later computations. Let $V_q=\vol(G_q\backslash\h)$. Using the Gauss--Bonnet formula, $V_q=\pi(1-2/q)$.

\begin{lm}
For each $n\in\Z$,
$$
\rho(u^n) = \frac{V_q n}{4\pi}.
$$
\end{lm}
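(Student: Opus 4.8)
The plan is to reduce the claim to the single value $\rho(u)$ and then read that value off from the normalization built into $\rho_\frak{a}$. Throughout, write $u=\bsm 1&1\\&1\esm=\sigma_\infty^{-1}T_q\sigma_\infty$, so that $u^n=\sigma_\infty^{-1}T_q^n\sigma_\infty=\bsm 1&n\\&1\esm$ for every $n\in\Z$.

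First I would show that $n\mapsto\rho(u^n)$ is additive. For any $m,k\in\Z$ the three matrices $u^m$, $u^k$, $u^{m+k}$ are upper unitriangular, so in each of them $c=0$ and $-d=-1$, whence $\sign(c(-d))=-1$ in all three slots. The triple $(-1,-1,-1)$ falls under the "otherwise" case of the table in \cref{corol}, so $\omega(u^m,u^k)=0$. Substituting this into the relation (\ref{AGR}) gives
$$
\rho(u^{m+k})=\rho(u^m)+\rho(u^k)
$$
for all $m,k\in\Z$; together with $\rho(I)=0$ this shows that $n\mapsto\rho(u^n)$ is a group homomorphism $\Z\to\R$, so $\rho(u^n)=n\,\rho(u)$. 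It therefore suffices to prove $\rho(u)=\frac{V_q}{4\pi}$.

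For the base case $n=1$ I would invoke the construction of $\rho_\frak{a}$ recalled in the introduction (see \cite{moi} and \cite{Go,Hej,Pat}): $\rho_\frak{a}$ arises from a generalized $\log\eta_\frak{a}$-function — equivalently, from the Kronecker first limit formula for the Eisenstein series attached to $\frak{a}$ — and is normalized so that its restriction to the scaled parabolic subgroup $\sigma_\frak{a}^{-1}\Gamma_\frak{a}\sigma_\frak{a}=\pm\bsm 1&\Z\\&1\esm$ is the linear map sending $\bsm 1&1\\&1\esm$ to $\tfrac{\vol(\Gamma\backslash\h)}{4\pi}$. This is precisely the analogue of the classical identity $\log\eta(z+1)=\log\eta(z)+\tfrac{\pi i}{12}$, which yields $\rho(T)=\tfrac1{12}=\tfrac{\vol(\SL_2(\Z)\backslash\h)}{4\pi}$ for the modular group. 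Specializing to $\Gamma=G_q$, $\frak{a}=\infty$, and using $V_q=\pi(1-2/q)$ from Gauss--Bonnet, we obtain $\rho(u)=\tfrac{V_q}{4\pi}$, and the lemma follows from the previous step.

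The only genuinely non-formal point is this last one: the cocycle identity (\ref{AGR}) pins $\rho$ down only up to an element of $\mathrm{Hom}(\Gamma,\R)$, so the value $\rho(u)=\tfrac{V_q}{4\pi}$ must come from the specific normalization of the construction rather than from (\ref{AGR}) alone. (The citation could be avoided if desired: since $S$ has order $4$ and the elliptic generator of $G_q$ has order $q$, one has $\mathrm{Hom}(G_q,\R)=0$, so $\rho$ is the \emph{unique} solution of (\ref{AGR}) on $G_q$ and $\rho(u)$ is determined by the relations $s^2=-I$ and $(\text{elliptic})^q=\pm I$ together with \cref{corol} — but this duplicates the sign bookkeeping of the continued-fraction computations in the next section and is less transparent than the route above.)
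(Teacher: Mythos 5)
Your argument is correct, and at the decisive point it rests on the same input as the paper: the paper's entire proof is a citation to Lemma~4.2 of \cite{moi}, i.e.\ to the automorphic construction of $\rho_\frak{a}$ and its normalization on the parabolic subgroup, which is exactly what you invoke for the base case $\rho(u)=\tfrac{V_q}{4\pi}$. What you add is a clean in-paper reduction: since $c=0$ and $-d=-1$ for all powers of $u$, the table of \cref{corol} gives $\omega(u^m,u^k)=0$ and $\rho(I)=0$, so (\ref{AGR}) makes $n\mapsto\rho(u^n)$ a homomorphism and only the single value $\rho(u)$ needs to be imported. That reduction is correct (and is implicitly how the cited lemma is used elsewhere), but be aware that it does not lessen the dependence on \cite{moi}: describing the construction as ``normalized so that $u\mapsto\tfrac{\vol(\Gamma\backslash\h)}{4\pi}$'' is not a normalization one is free to impose, it is the computed content of the cited lemma (via the Kronecker limit formula / the constant term of the Eisenstein series), so your proof and the paper's are equivalent in what they take on faith. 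Your parenthetical observation that $\mathrm{Hom}(G_q,\R)=0$ (as $G_q$ is generated by the torsion elements $S$ and $ST_q$), so that (\ref{AGR}) determines $\rho$ uniquely on $G_q$ and the value of $\rho(u)$ is in principle derivable from the cocycle alone, is a genuinely nice point and would be the route to a citation-free proof for the Hecke groups, though as you say it requires carrying out the sign bookkeeping rather than just asserting it.
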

\begin{proof}
This is Lemma 4.2 in \cite{moi}.
\end{proof}

\begin{lm}\label{useful}
Let $g=\bsm a&b\\ c&d\esm\in\SL_2(\R)$. Then
\begin{align*}
\omega(g,s)&= \begin{dcases} 1 & \text{ if }\ c>0\text{ and } d<0,\\ 0 & \text{ otherwise;} \end{dcases} \\
\omega(g,u^n) &= 0\ \text{ for all } n\in\Z \\
\rho(s) &= -1/4\\
\rho(v^n) &= - \rho(u^n)\ \text{ for all } n\in\Z.
\end{align*}
\end{lm}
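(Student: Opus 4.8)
The plan is to verify each of the four identities using \cref{corol} (the table for $\omega$) together with the agreement relation \eqref{AGR} and the already-established normalizations $\rho(I)=0$, $\rho(-I)=-1/2$. For the first identity, I would compute $\omega(g,s)$ by applying \cref{corol} to the pair $(g,s)$: since $s=\bsm 0&-1/\lambda_q\\ \lambda_q&0\esm$ has $c_s=\lambda_q>0$ and $d_s=0$, we have $c_s(-d_s)=\lambda_q>0$, so $\sign(c_s(-d_s))=1$. Writing $gs=\bsm b\lambda_q & -a/\lambda_q\\ d\lambda_q & -c/\lambda_q\esm$, its lower row is $(d\lambda_q,-c/\lambda_q)$, so $c_{gs}(-d_{gs})=d\lambda_q$ if $d\neq 0$ and $=c/\lambda_q$ if $d=0$; in all cases $\sign(c_{gs}(-d_{gs}))=\sign(d)$ unless $d=0$, in which case it is $\sign(c)$. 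The table then forces $\omega(g,s)=1$ exactly when $\sign(c_g(-d_g))=1$, $\sign(c_s(-d_s))=1$, $\sign(c_{gs}(-d_{gs}))=-1$, i.e. when $c_g(-d_g)>0$ and $d<0$; a short case check ($c>0$, $d<0$ gives $c_g(-d_g)=c>0$; the other sign patterns never yield the $(1,1,-1)$ or $(-1,-1,1)$ rows) confirms this is equivalent to ``$c>0$ and $d<0$'', as claimed. The $(-1,-1,1)$ row would require $\sign(c_s(-d_s))=-1$, which is impossible since it equals $1$, so $\omega(g,s)\in\{0,1\}$ throughout.

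For $\omega(g,u^n)=0$: here $u^n=\bsm 1&n\\0&1\esm$, so $c_{u^n}=0$ and $c_{u^n}(-d_{u^n})=-1<0$, giving $\sign=-1$. Then $gu^n=\bsm a & an+b\\ c & cn+d\esm$ has the same first column as $g$, so $\sign(c_{gu^n}(-d_{gu^n}))$ and $\sign(c_g(-d_g))$ either agree (when $c\neq 0$) or — when $c=0$ — are $\sign(-(cn+d))=\sign(-d)$ versus $\sign(-d)$, hence equal. Thus the triple is $(\epsilon,-1,\epsilon)$, which is neither $(1,1,-1)$ nor $(-1,-1,1)$; the table gives $\omega=0$. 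The value $\rho(s)=-1/4$ then follows from \eqref{AGR} applied to $s^2$: since $s^2=\bsm -1/\lambda_q^2\cdot 0 - 1 & \cdots\esm = -I$ (indeed $S^2=-I$, and conjugation preserves this), we get $2\rho(s)=\rho(s^2)+\rho(I)\cdot 0-\ldots$; more precisely $\rho(s\cdot s)-2\rho(s)=\omega(s,s)$, so $2\rho(s)=\rho(-I)-\omega(s,s)$, and $\omega(s,s)$ is read off the table ($c_s(-d_s)=\lambda_q>0$ twice, and $s^2=-I$ has $c_{s^2}=0$, $d_{s^2}=-1$, so $c_{s^2}(-d_{s^2})=1>0$, giving triple $(1,1,1)$, hence $\omega(s,s)=0$). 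Therefore $2\rho(s)=-1/2$, i.e. $\rho(s)=-1/4$.

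For the last identity $\rho(v^n)=-\rho(u^n)$, I would exhibit an algebraic relation between $v^n$, $u^n$ and $s$ and push it through \eqref{AGR}. The natural one is $v^n = s\, u^{-n} s^{-1}$ — this mirrors the fact that $S S_q^n S^{-1} = T_q^{-n}$ (up to sign) in $G_q$, which one checks directly: conjugating $u^{-n}=\bsm 1&-n\\0&1\esm$ by $s=\bsm 0&-1/\lambda_q\\ \lambda_q&0\esm$ yields $\bsm 1&0\\ n\lambda_q^2&1\esm=v^n$. Now apply \eqref{AGR} repeatedly to the product $v^n = s\,u^{-n}\,s^{-1}$: writing $\rho(s u^{-n} s^{-1})$ as a telescoping sum of $\rho$-values plus $\omega$-terms, and using $\omega(g,s)$ from the first identity, $\omega(g,u^n)=0$ from the second, the relation $\rho(g)+\rho(g^{-1})=\omega(g,g^{-1})=0$ (Lemma, with $\omega(g,g^{-1})=0$ by \cref{corol}), and $\rho(u^{-n})=-\rho(u^n)$ (same Lemma), the $\rho(s)$ and $\rho(s^{-1})$ contributions cancel and one is left with $\rho(v^n)=\rho(u^{-n})+(\text{sum of }\omega\text{-terms})$. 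The main obstacle — and the only place real bookkeeping is needed — is tracking these $\omega$-terms: one must evaluate $\omega(s,u^{-n})$, $\omega(su^{-n},s^{-1})$, and possibly a sign-of-$\lambda_q^2$ subtlety, all via \cref{corol}, and check that they sum to $-2\rho(u^n)-\rho(u^{-n})+\rho(v^n)$'s worth of correction, i.e. that everything collapses to $\rho(v^n)=-\rho(u^n)$. I expect each individual $\omega$ to be $0$ (since $s^{-1}=-s$ has lower row $(-\lambda_q,0)$ with $c(-d)=-\lambda_q<0$, and the conjugates of upper-triangular $u^{-n}$ land in lower-triangular form with controllable signs), so that the computation will close cleanly; if a stray $\pm 1$ appears it will be absorbed by the $\rho(s)=-1/4$ and $\rho(-I)=-1/2$ normalizations.
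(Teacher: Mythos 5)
Your approach coincides with the paper's: the paper's entire proof of this lemma is the one-liner ``Follows from \cref{corol}'', i.e.\ precisely the table lookups, the relation \eqref{AGR}, and the normalizations $\rho(I)=0$, $\rho(-I)=-1/2$ from the earlier lemma that you spell out. Your computations of $\omega(g,u^n)=0$ and $\rho(s)=-1/4$ are correct. For the last identity you only state an expectation, but the conjugation route $v^n=su^{-n}s^{-1}$ does close exactly as you predict: by \eqref{AGR}, $\rho(v^n)=\rho(s)+\rho(u^{-n})+\rho(s^{-1})+\omega(s,u^{-n})+\omega(su^{-n},s^{-1})$, the sign triples for the two cocycle terms are $(1,-1,1)$ and $(1,-1,\sign(n))$ (with $(1,-1,-1)$ when $n=0$), so both vanish, and since $\omega(s,s^{-1})=0$ gives $\rho(s^{-1})=-\rho(s)$ and $\omega(u^n,u^{-n})=0$ gives $\rho(u^{-n})=-\rho(u^n)$, one gets $\rho(v^n)=\rho(u^{-n})=-\rho(u^n)$ with no stray $\pm1$. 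You should carry out this bookkeeping rather than assert it, but the plan is sound.

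The one genuine inaccuracy is in your case check for $\omega(g,s)$. Your own reduction correctly gives $\omega(g,s)=1$ iff $\sign(c_g(-d_g))=1$ and $\sign(c_{gs}(-d_{gs}))=-1$, but this condition is also satisfied when $c=0$ and $d<0$: there $c_g(-d_g)=-d>0$ and $c_{gs}=d\lambda_q<0$, so the triple is $(1,1,-1)$ and the table gives $\omega(g,s)=1$. For instance $\omega(-I,s)=1$, as one can also confirm directly from \eqref{omega} at $z=i$. So your claim that the sign patterns ``never yield'' the $(1,1,-1)$ row outside $c>0$, $d<0$ is false, and the asserted equivalence with ``$c>0$ and $d<0$'' fails in this corner. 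This is really an imprecision in the lemma as stated (and is invisible in the paper's one-line proof); it is harmless in the paper's subsequent applications, where the first argument has $c\neq0$, but a careful verification should either restrict to $c\neq 0$ or record the exceptional case rather than claim the stated dichotomy has been confirmed.
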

\begin{proof}
Follows from \cref{corol}.
\end{proof}

\subsection{Dedekind's reciprocity law for Hecke triangle groups}

\begin{proof}[Proof of \cref{thm HTG}]
Let $\gamma=\bsm a&*\\ c&*\esm\in G_q$. We note that
$$
\mathcal{S}\left(\left[\frac{a}{c}\right]\right) -\mathcal{S}\left(\left[\frac{c}{a}\right]\right) = \mathcal{S}([\![\gamma]\!]) -\mathcal{S}([\![S\gamma]\!]) = \mathcal{S}([\![\gamma]\!]) +\mathcal{S}([\![\gamma^{-1}S]\!]) +\mathcal{S}([\![S]\!]).
$$
The reciprocity law now follows as a corollary to \cref{THM1}, or can be seen directly from
\begin{align*}
\mathcal{S}([\![\gamma]\!]) - \mathcal{S}([\![S\gamma]\!]) & = \frac{V_q}{4\pi\lambda_q}\left(\frac{c}{a}+\frac{1}{ac} + \frac{c}{a}\right) + \rho(s) +\omega(s,\gamma_q)+\frac14\left(\sign(d) -\sign(c)\right) \\
& = \frac{1-\frac{2}{q}}{8\cos(\frac{\pi}{q})}\left(\frac{c}{a}+\frac{1}{ac} + \frac{c}{a}\right) +\rho(s) +\frac{1}{4} -\frac14 \sign(ac)\\
& = \frac{1-\frac{2}{q}}{8\cos(\frac{\pi}{q})}\left(\frac{c}{a}+\frac{1}{ac} + \frac{c}{a}\right) -\frac14 \sign(ac)
\end{align*}
by applying successively \cref{omega values} and \cref{useful}.
\end{proof}

\subsection{A formula for the Dedekind symbol attached to a Hecke triangle group}

\begin{proof}[Proof of \cref{THM3}]
We proceed by induction on the length $n$ of words of the form
$$
S_q^{a_1} T_q^{a_2} \cdots T^{a_{n-1}}_q S_q^{a_n}.
$$ If $n=1$, then, by definition,
$$
\mathcal{S}([\![ S_q^{a_1}]\!]) = \frac{V_q}{4\pi} \frac{a+d}{c\lambda_q} - \rho(v^{a_1}) - \frac14  = \frac{V_q}{4\pi} \frac{a+d}{c\lambda_q}+ a_1 - \frac14, 
$$
and, we can check with \cref{useful} that
$$
\mathcal{S}([\![ S_q^{a_1} T_q^{a_2} S]\!]) = \frac{V_q}{4\pi} \frac{a+d}{c\lambda_q} - \rho(v^{a_1}u^{a_2}s) -\frac14 =  \frac{V_q}{4\pi} \frac{a+d}{c\lambda_q}  + a_1 - a_2
$$
for $n=2$. For $n\geq 3$, let
$$
\gamma_n = \begin{dcases} S_q^{a_1}T^{a_2}_q \cdots T^{a_{n-1}}_q S_q^{a_n} & \text{ if } n \text{ is odd,} \\
S_q^{a_1} T^{a_2}_q \cdots T_q^{a_n} S & \text{ if } n \text{ is even,}
\end{dcases}
$$
and observe that 
$$
[\![\gamma_{n-1}]\!] =  \begin{dcases} [\![ \gamma_n ST_q^{a_n}]\!] = [\![ \gamma_n S]\!] & \text{ if } n \text{ is odd,} \\
[\![ \gamma_n S T^{-a_n}_q]\!] = [\![\gamma_n S]\!] & \text{ if } n \text{ is even.}
\end{dcases}
$$
By the induction hypothesis,
$$
\mathcal{S}([\![\gamma_{n-1}]\!]) = \frac{V_q}{4\pi\lambda_q}\left( \gamma_{n-1}.\infty -\gamma_{n-1}^{-1}.\infty - \sum^{n-1}_{j=1} (-1)^j a_j \lambda_q\right) - \frac{1-(-1)^{n-1}}{8},
$$
where 
$$
\gamma_{n-1}.\infty-\gamma_{n-1}^{-1}.\infty = (\gamma_n S).\infty - (T_q^{\mp a_n} S\gamma_n^{-1}).\infty = (\gamma_n S).\infty \pm a_n \lambda_q - (S\gamma^{-1}_n).\infty
$$
with $+a_n \lambda_q$ if $n$ is odd and $-a_n \lambda_q$ otherwise. Together with the reciprocity law,
\begin{align*}
\mathcal{S}([\![\gamma_n]\!]) &= 
\frac{V_q}{4\pi\lambda_q}\left( \frac{d}{c} + \frac{1}{cd} + S\gamma_n^{-1}.\infty\right) - \frac14\sign(cd) + \mathcal{S}([\![\gamma_{n-1}]\!]) \\
& = \frac{V_q}{4\pi\lambda_q}\left( \frac{d}{c}+\frac{1}{cd} +\gamma_n S.\infty -\lambda_q\sum_{j=1}^n (-1)^j a_j\right) -\frac{1-(-1)^n}{8}\\
& = \frac{V_q}{4\pi\lambda_q} \left( \frac{d}{c} +\frac{1}{cd} + \frac{b}{d} - \lambda_q\sum^{n}_{j=1}(-1)^j a_j \right) -  \frac{1-(-1)^{n}}{8}\\
& = \frac{V_q}{4\pi \lambda_q} \left(\frac{a+d}{c} - \lambda_q\sum^{n}_{j=1}(-1)^j a_j \right) -  \frac{1-(-1)^{n-1}}{8}
\end{align*}
and the identification with continued fraction expansions is immediate since $\frac{a}{c}=\gamma_n.\infty$ and $\frac{d}{c}=-\gamma_n^{-1}.\infty.$

\end{proof}


\begin{thebibliography}{10}

\bibitem[Asa70]{Asai} T. Asai, {\em The reciprocity of Dedekind sums and the factor set for the universal covering group of $\SL(2,\R)$.} Nagoya Math. J. {\bf 37} (1970), 67--80.

\bibitem[Ati87]{Ati} M. Atiyah, {\em The logarithm of the Dedekind $\eta$-function.} Math.~Ann. {\bf 278} (1987), 335--380.

\bibitem[BG92]{BG} J.~Barge, \'E.~Ghys, {\em Cocycles d'Euler et de aslov}. Math.~Ann. {\bf 294} (1992), 235--265.

\bibitem[Bu017]{moi} C. Burrin, {\em Generalized Dedekind sums and equidistribution mod 1}. J. Number Theory {\bf 172} (2017), 270--286.

\bibitem[De892]{Ded} R. Dedekind, {\em Erl\"auterungen zu den Fragmenten XXVIII}. In B. Riemann, Ges. Math. Werke, Leipzig, 1892, 466-478; Dedekind Ges. Werke I, 1930, 159--172.

\bibitem[Die57]{Dieter} U. Dieter, {\em Beziehungen zwischen Dedekindschen Summen.} Abh. Math. Semin.  Univ. Hambg. {\bf 21} (1957), 109--125.

\bibitem[Gol73]{Go} L.J. Goldstein, {\em Dedekind sums for a Fuchsian group, I}. Nagoya Math. J. {\bf 50} (1973), 21--47.

\bibitem[Gol74]{Go2} L.J. Goldstein, {\em Errata for "Dedekind sums for a Fuchsian group, I."} Nagoya Math. J. {\bf 53} (1974), 235--237.

\bibitem[Hej83]{Hej} D. Hejhal, {\em The Selberg trace formula for $\PSL(2,\R)$, Vol. 2}. Lect. Notes in Math. {\bf 1001}, Springer-Verlag, 1983.

\bibitem[Hic77]{Hickerson} D.~Hickerson, {\em Continued fractions and density results for Dedekind sums.} J. Reine Angew.~Math. (1977), 113--116.

\bibitem[Pat75]{Pat} S.J.~Patterson, {\em On the cohomology of Fuchsian groups.} Glasgow Math.~J., vol.~16 (1975), 123--140.

\bibitem[Pet30]{Pet0} H.~Petersson, {\em Theorie der automorphen Formen beliebiger reeller Dimension und ihre Darstellung durch eine neue Art Poincar\'escher Reihen.} Math.~Ann. {\bf 103} (1930), 396--436.

\bibitem[Pet38]{Pet} H. Petersson, {\em Zur analytischen Theorie der Grenzkreisgruppen I}. Math. Ann. {\bf 115} (1938), 23--67.

\bibitem[RaG72]{Rad} H. Rademacher, E. Grosswald, {\em Dedekind sums.} Carus Math. Monog., MAA, 1972.

\end{thebibliography}
\end{document}